\documentclass[12pt]{amsart}


%
%
\newtheorem{THM}{Theorem}
\newtheorem{LMA}[THM]{Lemma}
\newtheorem{PROP}[THM]{Proposition}
\newtheorem{CORO}[THM]{Corollary}
\newtheorem{CONJ}[THM]{Conjecture}
\newtheorem{EG}[THM]{Example}


\usepackage{amsmath}
\usepackage{amssymb}
\usepackage{euscript}

%
%
%
\usepackage[pdftex]{graphicx}

\newcommand{\A}{\EuScript{A}}
\newcommand{\B}{\EuScript{B}}
\newcommand{\C}{\EuScript{C}} \newcommand{\CC}{\mathbb{C}}

 \newcommand{\EE}{\mathbb{E}}
\newcommand{\e}{\mathbf{e}}
 \newcommand{\FF}{\mathbb{F}}

\newcommand{\M}{\EuScript{M}}
\newcommand{\N}{\EuScript{N}} \newcommand{\NN}{\mathbb{N}}

 \newcommand{\RR}{\mathbb{R}}
\renewcommand{\S}{\EuScript{S}}

\newcommand{\U}{\EuScript{U}}

 \newcommand{\ZZ}{\mathbb{Z}}

\newcommand{\la}{\langle}
\newcommand{\ra}{\rangle}
\newcommand{\goesto}{\rightarrow}
\newcommand{\drop}{\setminus}
\newcommand{\none}{\varnothing}

\newcommand{\zero}{\boldsymbol{0}}
\newcommand{\supp}{\mathrm{supp}}

\newcommand{\Gram}{\mathrm{Gram}}
\newcommand{\Row}{\mathrm{Row}}
\newcommand{\showon}{\begin{eqnarray*}}
\newcommand{\showoff}{\end{eqnarray*}}
\renewcommand{\cases}[1]{\left\{\begin{array}{rl} #1 \end{array}\right.}

\begin{document}


\title[Lattice of Integer flows]{The lattice of integer flows\\
of a regular matroid}

\author{Yi Su}
\address{Department of Combinatorics and Optimization\\
University of Waterloo\\
Waterloo, Ontario, Canada\ \ N2L 3G1}
\email{\texttt{next456@gmail.com}}
\thanks{Research of Y.S. supported by an NSERC Undergraduate Summer Research 
Award.}

\author{David G. Wagner}
\address{Department of Combinatorics and Optimization\\
University of Waterloo\\
Waterloo, Ontario, Canada\ \ N2L 3G1}
\email{\texttt{dgwagner@math.uwaterloo.ca}}
\thanks{Research of D.G.W. supported by NSERC Discovery Grant OGP0105392.}

\keywords{regular matroid, integral lattice, isometry, reconstruction}
\subjclass{05B35, 52C07}

\date{}

\begin{abstract}
For a finite multigraph $G$, let $\Lambda(G)$ denote the lattice of
integer flows of $G$ -- this is a finitely generated free abelian
group with an integer-valued positive definite bilinear form.
Bacher, de~la~Harpe, and Nagnibeda show that if $G$ and $H$
are $2$-isomorphic graphs then $\Lambda(G)$ and $\Lambda(H)$ are isometric, and
remark that they were unable to find a pair of nonisomorphic $3$-connected
graphs for which the corresponding lattices are isometric.  We explain this by
examining the lattice $\Lambda(\M)$ of integer flows of any regular matroid
$\M$.  Let $\M_\bullet$ be the minor of $\M$ obtained by contracting all
co-loops.  We show that $\Lambda(\M)$ and $\Lambda(\N)$ are isometric if and
only if $\M_\bullet$ and $\N_\bullet$ are isomorphic.
\end{abstract}

\maketitle


\section{Introduction.}

Let $G=(V,E)$ be a (finite undirected connected multi-) graph.
Choose an arbitrary orientation for each edge of $G$, and let $D$
be the corresponding signed incidence matrix:\ $D$ is the $V$-by-$E$ matrix
with entries given by
$$
D_{ve} = \cases{
+1  & \mathrm{if}\ e\ \mathrm{points\ into}\ v\ \mathrm{but\ not\ out},\\
-1 & \mathrm{if}\ e\ \mathrm{points\ out\ of}\ v\ \mathrm{but\ not\ in},\\
0  & \mathrm{otherwise}.}
$$
The matrix $D$ defines a linear transformation $D:\RR^{E}\goesto\RR^{V}$.
The \emph{lattice of integer flows} of $G$ is
$\Lambda(G)=\mathrm{ker}(D)\cap\ZZ^{E}$.  This is a finitely generated
free abelian group with a positive definite integer-valued inner product
$\la\cdot,\cdot\ra$ induced by the Euclidean dot product on $\RR^{E}$.
Of course, the set $\Lambda(G)$ depends on the 
choice of orientations defining the matrix $D$.  Reversing the 
orientation of the edge $e\in E$ results in changing the sign of
the $e$-th coordinate of every element of $\Lambda(G)$.  This changes
neither the group structure nor the inner product structure of the lattice
$(\Lambda(G),+,\la\cdot,\cdot\ra)$.  Thus, the isometry class of this 
lattice is independent of the choice of orientations of the edges, and
depends only on the isomorphism class of $G$.
(An \emph{isometry} of lattices $\Lambda$ and $\Lambda'$ is a bijection
$\psi:\Lambda\goesto\Lambda'$ such that both $\psi$ and $\psi^{-1}$
are abelian group homomorphisms that preserve the bilinear forms on the
lattices.)  Bacher, de~la~Harpe, and
Nagnibeda \cite{BHN} and Biggs \cite{Bi} thoroughly develop the theory of
these lattices and their many interpretations, connections, and analogues.

A natural question of reconstruction arises:\ to what extent can
properties of the graph $G$ be
determined from the isometry class of the lattice $\Lambda(G)$?
Cut-edges of $G$ contribute nothing to $\Lambda(G)$.  Proposition 5 of  
Bacher, de~la~Harpe, and Nagnibeda \cite{BHN} shows that if $G$ and 
$H$ are $2$-isomorphic then $\Lambda(G)$ and $\Lambda(H)$ are 
isometric.   They remark (on page 197) that they were unable to find a pair
of nonisomorphic $3$-connected graphs with isometric lattices
of integer flows.  By Whitney's theorems \cite{Wh} on $2$-isomorphism of
graphs, this suggests that $\Lambda(G)$ and $\Lambda(H)$ are isometric
if and only if the graphic matroids $\M(G)$ and $\M(H)$ are isomorphic
except for co-loops.

This is indeed the case, as follows from Theorem \ref{main} below.
For any matroid $\M$, let $\M_\bullet$ denote the
minor of $\M$ obtained by contracting all co-loops of $\M$.
Let $(\M,E)$ be a regular matroid of rank $r$ on a ground-set $E$.
Then $\M$ has a unique representation (over $\RR)$ as the column-matroid
of a totally unimodular (TU) matrix $M$ (modulo representation equivalence).
The \emph{lattice of integer flows} of $\M$ is
$\Lambda(\M)=\mathrm{ker}(M)\cap\ZZ^{E}$.
This generalizes the construction for graphs, in which case
$M$ is the signed incidence matrix of a connected graph with any row deleted. 
The isometry class of the lattice $\Lambda(\M)$ is independent of the
choice of representing matrix $M$, and depends only on the isomorphism
class of $\M$.  In his foundational work on representability of 
matroids, Tutte worked with a more general concept of 
``chain-groups'' in which the coefficients are from any integral 
domain; see \cite{Tut2}, for example.  The chain-group of
$\M$ with integer coefficients is, in our notation, $\Lambda(\M^{*})$.

\begin{THM} \label{main}
Let $\M$ and $\N$ be regular matroids.  Then
$\Lambda(\M)$ and $\Lambda(\N)$ are isometric if and only
if $\M_\bullet$ and $\N_\bullet$ are isomorphic.
\end{THM}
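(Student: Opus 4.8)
The plan is to prove both directions, with the ``only if'' direction being the substantial one. For the ``if'' direction, suppose $\M_\bullet$ and $\N_\bullet$ are isomorphic. If $M$ is a TU matrix representing $\M$ and $e$ is a co-loop of $\M$, then the column indexed by $e$ is not in the linear span of the others, so every $x\in\ker(M)$ has $x_e=0$; deleting the $e$-th coordinate identifies $\Lambda(\M)$ isometrically with $\Lambda(\M\drop e)=\Lambda(\M/e)$. Iterating over all co-loops gives an isometry $\Lambda(\M)\cong\Lambda(\M_\bullet)$, and likewise $\Lambda(\N)\cong\Lambda(\N_\bullet)$; since a matroid isomorphism $\M_\bullet\cong\N_\bullet$ is patently an isometry of flow lattices, we get $\Lambda(\M)\cong\Lambda(\N)$.

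The heart of the converse is a lattice-theoretic description of the \emph{circuit vectors} of $\Lambda(\M)$, the vectors $c\in\ker(M)\cap\ZZ^{E}$ with entries in $\{0,\pm1\}$ whose support is a circuit of $\M$. By total unimodularity these are exactly the primitive vectors of $\ker(M)$ of minimal support, occurring in one antipodal pair $\{\pm c\}$ per circuit. Call $v\in\Lambda\drop\{\zero\}$ \emph{reducible} if $v=a+b$ for some $a,b\in\Lambda\drop\{\zero\}$ with $\la a,b\ra\ge 0$. I claim $v$ is irreducible precisely when $v=\pm c$ for a circuit vector $c$. If $v=c$ is a circuit vector and $c=a+b$ with $a,b$ nonzero, then in each coordinate $e$ the integers $a_{e},b_{e}$ sum to $c_{e}\in\{0,\pm1\}$, forcing $a_{e}b_{e}\le 0$, so $\la a,b\ra\le 0$; equality would make $a$ and $b$ supported inside $\supp(c)$ with disjoint nonempty supports, contradicting minimality of the circuit $\supp(c)$, so in fact $\la a,b\ra<0$ and $v$ is irreducible. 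Conversely, any $v\in\Lambda$ has an integral conformal decomposition $v=c_{1}+\dots+c_{k}$ into circuit vectors (a standard fact for regular matroids), and conformality forces $\la c_{i},c_{j}\ra\ge 0$ for all $i,j$; when $k\ge 2$ the splitting $v=c_{1}+(c_{2}+\dots+c_{k})$ shows $v$ is reducible. Since an isometry $\psi\colon\Lambda(\M)\to\Lambda(\N)$ preserves reducibility and commutes with negation, it carries the antipodal pairs of circuit vectors of $\M$ bijectively onto those of $\N$, inducing a size-preserving bijection between the circuit sets of $\M_\bullet$ and $\N_\bullet$.

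A bijection between circuit sets is not yet a matroid isomorphism, because the ground set is not directly visible in the abstract lattice; recovering it is where the real work lies. The plan is to reconstruct each element $e$ of $\M_\bullet$ as lattice data: the corank-$1$ subgroup $H_{e}=\{x\in\Lambda(\M):x_{e}=0\}=\Lambda(\M\drop e)$, whose orthogonal complement in $\Lambda\otimes\RR$ is the line spanned by $u_{e}=\pi(\e_{e})$, the orthogonal projection of the $e$-th coordinate vector onto $\ker(M)$. These vectors satisfy $\sum_{e}u_{e}\otimes u_{e}=\mathrm{id}_{\ker(M)}$ (a Parseval frame), with $u_{e}=\zero$ exactly when $e$ is a co-loop, and every circuit vector expands as $c=\sum_{e\in\supp(c)}c_{e}u_{e}$; so $E(\M_\bullet)$ together with its circuit incidences is encoded as a distinguished frame inside $\Lambda\otimes\RR$ that meshes with the circuit vectors. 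If this frame --- equivalently, the family of ``coordinate-hyperplane'' sublattices $H_{e}$ --- can be pinned down \emph{intrinsically}, by a purely lattice-theoretic characterization of which corank-$1$ sublattices arise in this way, then any isometry $\psi$ must send the frame of $\M$ to that of $\N$, hence induce a bijection $\alpha\colon E(\M_\bullet)\to E(\N_\bullet)$; combining $\psi(c)=\sum_{e}c_{e}\psi(u_{e})$ with the fact that $\psi$ preserves circuit vectors, $\alpha$ carries circuits to circuits, giving $\M_\bullet\cong\N_\bullet$.

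The assertion that this frame is intrinsic is, of course, essentially equivalent to the theorem: a positive-definite integral lattice should be the flow lattice of at most one co-loop-free regular matroid. So the main obstacle is exactly to find and prove the right characterization of the sublattices $H_{e}$. I would attack this either directly --- examining the short vectors of $\Lambda^{*}$, or the minimal ways of resolving the Gram matrix of the circuit vectors into rank-one $\{0,\pm1\}$ summands --- or, failing that, by first decomposing $\Lambda$ into orthogonally indecomposable summands (which one checks correspond to the connected components of $\M_\bullet$, using the irreducibility of circuit vectors and matroid connectivity) and then invoking Seymour's decomposition of regular matroids into graphic, cographic, and $R_{10}$ pieces: the graphic case is governed by Whitney's $2$-isomorphism theorem (precisely the phenomenon behind the $3$-connected graphs mentioned above), and one would track how the flow lattice and the circuit vectors behave under $1$-, $2$-, and $3$-sums.
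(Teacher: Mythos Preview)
Your ``if'' direction and your metric characterization of circuit vectors are both correct and match the paper's Proposition~\ref{sf-metric} essentially verbatim. The gap is that you do not complete the ``only if'' direction: you yourself say that pinning down the coordinate frame $(u_e)$ intrinsically ``is, of course, essentially equivalent to the theorem,'' and then offer only strategies (short vectors of $\Lambda^*$, rank-one resolutions, or Seymour's decomposition) without executing any of them. As written this is a proof outline that stops precisely at the hard step.

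The paper's route is quite different from any of your suggestions and avoids Seymour's theorem entirely. Rather than trying to recover individual ground-set elements, it fixes a \emph{fundamental basis} $\B(\M,B)$, whose matrix $U$ is TU and contains $I_s$, and shows that the absolute-value matrix $U^\sharp$ is determined (up to row permutation and deletion of zero rows) by the Gram matrix $A=U^\dagger U$ alone. The mechanism is combinatorial: from $A$ one forms the function $f_A(S)=\min_{\{i,j\}\subseteq S}|a_{ij}|$ (with adjustments for negative triples), applies M\"obius inversion to get $g_A$, and proves that for TU $U$ the value $g_A(S)$ counts the rows of $U$ with support exactly $S$; this yields the $\{0,1\}$-matrix $X(A)$ whose rows are the nonzero rows of $U^\sharp$. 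Given the isometry $\psi$, the image basis has matrix $Q$ with $Q^\dagger Q=A$; one shows $Q$ is WU (easy) and then TU (Lemma~\ref{tu-gram}, which is where the simple-flow characterization is actually used), so $Q^\sharp$ also equals $X(A)$. Camion's Lemma then aligns $Q$ with $U$ up to row and column sign changes, exhibiting $\M_\bullet$ and $\N_\bullet$ as column matroids of the same TU matrix. The key idea you are missing is this reconstruction of $U^\sharp$ from $\Gram(\B)$ via inclusion--exclusion, which sidesteps the problem of characterizing the hyperplanes $H_e$ one at a time.
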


\begin{CORO}
Let $G$ and $H$ be $3$-connected graphs.  Then
$\Lambda(G)$ and $\Lambda(H)$ are isometric if and only if
$G$ and $H$ are isomorphic.
\end{CORO}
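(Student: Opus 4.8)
The plan is to derive the corollary directly from Theorem~\ref{main} together with Whitney's reconstruction results for $2$-isomorphic graphs. First I would record the identification $\Lambda(G)=\Lambda(\M(G))$, where $\M(G)$ is the cycle matroid of $G$: for a connected graph the rows of $D$ sum to zero, so deleting any one row leaves the kernel unchanged, and $D$ with a row deleted is exactly the TU representation of $\M(G)$ described in the introduction. The ``if'' direction is then immediate, and was already observed above: an isomorphism $G\to H$ induces a bijection of edge sets which, after possibly reversing some orientations, carries $D$ to the corresponding matrix for $H$ and hence restricts to an isometry $\Lambda(G)\to\Lambda(H)$.

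For the ``only if'' direction, suppose $\Lambda(G)$ and $\Lambda(H)$ are isometric. By the identification above and Theorem~\ref{main}, $\M(G)_\bullet$ and $\M(H)_\bullet$ are isomorphic matroids. The key observation is that the co-loops of $\M(G)$ are precisely the bridges (cut-edges) of $G$; and a $3$-connected graph, being $2$-connected with at least three vertices, has no bridges (a bridge would yield a cut-vertex), so $\M(G)_\bullet=\M(G)$ and likewise $\M(H)_\bullet=\M(H)$. Hence $\M(G)\cong\M(H)$ as matroids.

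It remains to pass from an isomorphism of cycle matroids to an isomorphism of graphs. By Whitney's theorems on $2$-isomorphism \cite{Wh}, two graphs with isomorphic cycle matroids are $2$-isomorphic, i.e.\ related by a finite sequence of vertex identifications/splittings at cut-vertices and Whitney twists at $2$-separations, and a $3$-connected graph admits no such nontrivial operation; thus $2$-isomorphic $3$-connected graphs are isomorphic. Therefore $G\cong H$. I expect no real obstacle here beyond keeping the conventions straight --- reading ``$3$-connected'' strongly enough that bridges are excluded (so that $\M_\bullet=\M$) and citing the precise form of Whitney's theorem that upgrades matroid isomorphism to graph isomorphism in the $3$-connected case. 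All the substance lies in Theorem~\ref{main} itself.
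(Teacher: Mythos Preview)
Your proof is correct and follows essentially the same route as the paper's: invoke Theorem~\ref{main} to reduce isometry of $\Lambda(G)$ and $\Lambda(H)$ to isomorphism of $\M(G)_\bullet$ and $\M(H)_\bullet$, observe that a $3$-connected graph has no cut-edges so $\M(G)_\bullet=\M(G)$, and then cite Whitney's theorem to pass from $\M(G)\cong\M(H)$ to $G\cong H$. The paper's version is terser, but the content is the same.
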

\begin{proof}
Whitney \cite{Wh} shows that $3$-connected graphs $G$ and $H$ are
isomorphic if and only if $\M(G)$ and $\M(H)$ are isomorphic.
Also, since $G$ has no cut-edges $\M(G)$ has no co-loops, so
that $\M(G)_\bullet=\M(G)$, and similarly for $\M(H)$.
The corollary now follows from Theorem \ref{main}.
\end{proof}

Our strategy for proving Theorem \ref{main} is to identify metric properties
of a basis $\B$ of an integral lattice $\Lambda$ that correspond
to $\Lambda$ being the lattice $\Lambda(\M)$ of integer flows of a
regular matroid $\M$, and to $\B$ being a fundamental basis $\B(\M,B)$
of $\Lambda(\M)$ consisting of signed circuits associated with a base $B$
of $\M$. (Since we are dealing both with lattices and with matroids we
use the word ``basis'' for a basis of a lattice, but ``base'' for what is
usually called a basis of a matroid.)

The implementation of this strategy rests on two key ideas.
The first key is a characterization of the signed circuits (or ``simple
flows'') of $\M$ in terms of metric data of the lattice $\Lambda(\M)$, without
reference to their coordinates as vectors in $\ZZ^{E}$.  The second key is to
identify properties of a symmetric integer matrix $A$ which correspond to the
existence of a TU matrix $U$ such that $U^\dagger U = A$:\  we find a necessary
condition on $A$ which we call ``$g$-nonnegativity''; to any $g$-nonnegative
matrix $A$ we associate a certain $\{0,1\}$-matrix  $X(A)$;\ finally, such a $U$
exists if and only if $X(A)$ has a TU signing $U$ such that $U^\dagger U = A$.
An auxiliary result about TU matrices then enables us to complete the proof
of Theorem \ref{main}.

In Section 2 we briefly review some preliminary facts concerning
totally unimodular matrices, regular matroids, and integer flows and cuts.
In Section 3 we develop some facts about signed circuits (or simple flows),
culminating in their characterization by metric data.  In Section 4 we introduce
$g$-nonnegative, $g$-positive, and $g$-feasible matrices, and prove Theorem
\ref{main}.  In Section 5 we conclude with some subsidiary results and examples,
and two conjectures.

We thank the anonymous referees for their constructive criticism, and one
especially for the references and comments regarding Proposition
\ref{cons-decomp} and Conjecture \ref{sixth-root}.

\section{Preliminaries.}

\subsection{Totally unimodular matrices.}

For a matrix $M$ of real numbers, let $M^{\sharp}$ be the matrix of
absolute values of the entries of $M$.
A matrix $U$ with entries in $\ZZ$ is \emph{totally unimodular} (TU) if 
every square submatrix of $U$ has determinant in the set $\{-1,0,+1\}$.
For a $\{0,1\}$-matrix $X$, a \emph{totally unimodular signing} of 
$X$ is a TU matrix $U$ such that $U^{\sharp} = X$.
A matrix $Q$ with entries in $\ZZ$ is \emph{weakly unimodular} (WU) if 
every maximal square submatrix of $Q$ has determinant in the set
$\{-1,0,+1\}$.  Let $I_s$ denote the $s$-by-$s$ identity matrix.
The proof of Lemma \ref{wu2tu} is elementary, and is omitted.

\begin{LMA} \label{wu2tu}
If an $m$-by-$s$ matrix $U$ is WU and contains $I_s$ as a submatrix,
then $U$ is TU.
\end{LMA}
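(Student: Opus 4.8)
The plan is to argue that every square submatrix of $U$ has determinant in $\{-1,0,+1\}$ by reducing an arbitrary square submatrix to a maximal one and invoking the WU hypothesis. First I would fix notation: suppose $U$ is $m$-by-$s$, WU, and contains $I_s$ as a submatrix on some set $R$ of $s$ rows (so the submatrix of $U$ on rows $R$ and all $s$ columns is $I_s$). Note that $U$ has rank $s$, so the maximal square submatrices of $U$ are exactly the $s$-by-$s$ ones.

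Now take an arbitrary $k$-by-$k$ submatrix $U'$ of $U$, say on row set $P$ and column set $Q$ with $|P|=|Q|=k\le s$. If $\det U'=0$ there is nothing to prove, so assume $U'$ is nonsingular. The key step is to enlarge $U'$ to an $s$-by-$s$ submatrix without changing the absolute value of the determinant. Let $Q^{c}$ be the $s-k$ columns of $U$ not in $Q$; within the rows $R$ carrying the copy of $I_s$, the columns $Q^{c}$ contain the $(s-k)$-by-$(s-k)$ identity among the corresponding unit coordinates. Choose the $s-k$ rows $R'\subseteq R$ of $U$ that carry the $I_s$-entries lying in the columns $Q^{c}$. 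I claim $R'\cap P=\none$: each row in $R'$ is a standard basis row of $I_s$ supported on a column of $Q^{c}$, hence has a zero in every column of $Q$; if such a row were in $P$, then the row of $U'$ it contributes would be entirely zero, contradicting nonsingularity of $U'$. Therefore $P\cup R'$ has size $k+(s-k)=s$, and the $s$-by-$s$ submatrix $U''$ of $U$ on rows $P\cup R'$ and columns $Q\cup Q^{c}$ (all columns) is well-defined and maximal.

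Finally I would compute $\det U''$ by Laplace expansion along the rows $R'$: each such row has a single nonzero entry, equal to $\pm 1$ (in fact $+1$ from $I_s$), and these entries sit in the columns $Q^{c}$, which are disjoint from $Q$. Expanding repeatedly along these rows strips off the columns $Q^{c}$ and the rows $R'$, leaving exactly the submatrix $U'$ up to sign; hence $|\det U''|=|\det U'|$. Since $U$ is WU, $\det U''\in\{-1,0,+1\}$, so $\det U'\in\{-1,0,+1\}$ as well. As $U'$ was an arbitrary square submatrix, $U$ is TU. The only mild subtlety — and the step I would be most careful to get right — is the disjointness claim $R'\cap P=\none$, which is what guarantees that the enlarged index sets genuinely have size $s$; everything else is bookkeeping with the Laplace expansion.
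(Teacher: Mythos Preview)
Your argument is correct and is precisely the standard elementary proof one would expect; the paper itself omits the proof of this lemma, merely noting that it is elementary, so there is nothing further to compare.
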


\begin{LMA}[Camion, see Lemma 13.1.6 of \cite{Ox}] \label{camion}
Let $Q$ and $U$ be TU matrices such that $Q^{\sharp} = U^{\sharp}$.
Then $Q$ can be changed into $U$ by multiplying some rows and columns by 
$-1$.
\end{LMA}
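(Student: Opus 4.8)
The natural approach is via the bipartite \emph{support graph} $B$:\ take as vertices the (disjoint) sets of row indices and column indices of $Q$, and join row $i$ to column $j$ by an edge $ij$ whenever $(Q^{\sharp})_{ij}=1$ (equivalently $(U^{\sharp})_{ij}=1$). Since every entry of a TU matrix lies in $\{-1,0,+1\}$, for each edge $ij$ of $B$ both $Q_{ij}$ and $U_{ij}$ are $\pm 1$, while off the common support of $Q$ and $U$ the entries are $0$. Put $\epsilon_{ij}:=Q_{ij}U_{ij}\in\{-1,+1\}$ for each edge $ij$ of $B$. It suffices to produce signs $\eta_v\in\{-1,+1\}$, indexed by the vertices $v$ of $B$, with $\epsilon_{ij}=\eta_i\eta_j$ for every edge $ij$: then negating row $i$ of $U$ whenever $\eta_i=-1$ and column $j$ of $U$ whenever $\eta_j=-1$ transforms $U$ into $Q$ (and this is reversible).

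The crux is the following claim:\ for every cycle $C$ of $B$, the product $\prod_{e\in C}M_e$ is the same for every TU matrix $M$ with $M^{\sharp}=Q^{\sharp}$ -- in other words it is determined by $B$ and $C$ alone and does not see the particular signing. Granting the claim, apply it to $M=Q$ and to $M=U$ to get $\prod_{e\in C}Q_e=\prod_{e\in C}U_e$, and therefore $\prod_{e\in C}\epsilon_e=\bigl(\prod_{e\in C}Q_e\bigr)\bigl(\prod_{e\in C}U_e\bigr)=1$ for every cycle $C$. A standard ``potential function'' argument then yields the desired $\eta$:\ on each connected component of $B$ fix a spanning tree $T$ and a root, and set $\eta_v$ equal to the $\epsilon$-product along the $T$-path from the root to $v$ (and $\eta_v=+1$ at isolated vertices, where the corresponding all-zero rows/columns of $Q$ and $U$ agree trivially); the vanishing of $\epsilon$-products around cycles makes this well defined and forces $\epsilon_{ij}=\eta_i\eta_j$ on every edge $ij$.

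To prove the claim, induct on the length of $C$ (every cycle of $B$ has even length $\ge 4$, since $B$ is bipartite and simple). If $C$ has a chord $e_0$, then $e_0$ cuts $C$ into two strictly shorter cycles $C_1,C_2$ of $B$ whose edge sets meet exactly in $\{e_0\}$ and whose union is $E(C)$; hence $\prod_{e\in C}M_e=\bigl(\prod_{e\in C_1}M_e\bigr)\bigl(\prod_{e\in C_2}M_e\bigr)$ (the entry $M_{e_0}$ occurs squared), and by induction the right-hand side is signing-independent. If $C$ is chordless, restrict $M$ to the rows and columns met by $C$; this is an $\ell$-by-$\ell$ matrix ($\ell$ being half the length of $C$) whose nonzero entries form a single $2\ell$-cycle, so its support has exactly two perfect matchings and its determinant expansion has exactly two nonzero terms, namely $\pm$ the two ``around-the-cycle'' products $p_1,p_2\in\{-1,+1\}$; moreover the two matchings partition the $2\ell$ edges of $C$, so $p_1p_2=\prod_{e\in C}M_e$. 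Since the two matchings differ by an $\ell$-cycle, the determinant equals $\pm\bigl(p_1+(-1)^{\ell-1}p_2\bigr)$, and total unimodularity forces this to be $0$, hence $p_1p_2=(-1)^{\ell}$. Thus $\prod_{e\in C}M_e=(-1)^{\ell}$, again independent of the signing, completing the induction and the proof. I expect the one genuinely fiddly point to be the chordless case -- confirming that a chordless cycle restricts to a truly cyclic support pattern, that only two permutations survive in the determinant, and that their two edge sets partition $E(C)$ -- since that is exactly where the hypothesis of total unimodularity is spent; everything else is routine.
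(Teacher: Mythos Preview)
The paper does not supply its own proof of this lemma; it merely cites Lemma~13.1.6 of Oxley's \emph{Matroid Theory}. Your argument is correct and is essentially the classical proof of Camion's theorem: recast the problem as finding a vertex $\pm 1$-potential for the edge signing $\epsilon$ on the bipartite support graph, reduce that to showing $\epsilon$ has trivial product around every cycle, and verify the latter by induction down to chordless cycles, where total unimodularity forces the two matching terms in the determinant to cancel.

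One small slip of wording, not of mathematics: when a chord $e_0$ splits $C$ into $C_1$ and $C_2$, one has $E(C_1)\cap E(C_2)=\{e_0\}$ and $E(C_1)\cup E(C_2)=E(C)\cup\{e_0\}$, not $E(C)$. Your parenthetical ``the entry $M_{e_0}$ occurs squared'' shows you are computing with the correct picture, so the identity $\prod_{e\in C}M_e=\bigl(\prod_{e\in C_1}M_e\bigr)\bigl(\prod_{e\in C_2}M_e\bigr)$ is right; just tidy the sentence.
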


Theorem 13.1.3 of \cite{Ox} determines exactly which $\{0,1\}$-matrices have TU 
signings, although we do not need this result until Example \ref{eg1}.

\subsection{Regular matroids.}

A \emph{regular} matroid $(\M,E)$ is the column-matroid of 
some $r$-by-$m$ TU matrix $M$ of rank $r$, represented over the real
field $\RR$.  The columns of $M$ are labelled by the set $E$. Two 
$\FF$-representations $M$ and $M'$ of a matroid are \emph{equivalent} if
there is an $r$-by-$r$ matrix $F$ invertible over $\FF$, an
$E$-by-$E$ $\FF$-weighted permutation matrix $P$, and a field 
automorphism $\sigma:\FF\goesto\FF$ such that
$$
M' = \sigma(FMP).
$$
(The column labels $E$ are also permuted according to $P$.)  Regular matroids
are \emph{uniquely representable} over any field $\FF$, meaning that any two
$\FF$-representations of a regular matroid are equivalent (Corollary 10.1.4
of \cite{Ox}).

Let $\M$ be represented by a TU matrix $M$.
If $B\subseteq E$ is a base of $\M$ then there is a signed permutation
matrix $P$ bringing the labels in $B$ into the first $r$ positions,
and a matrix $F$, invertible over $\ZZ$, such that
$$
FMP = [ I_{r}\ L ]
$$
for some $r$-by-$s$ matrix $L$, where $s=m-r$.  This is a 
representation of $\M$ \emph{coordinatized by $B$}.
Since $M$ is TU, $F$ is invertible over $\ZZ$, and $P$ is a signed permutation
matrix, it follows that $FMP$ is WU.  From Lemma \ref{wu2tu} (and transposition)
it follows that $[I_r\ L]$ is also TU (see also Lemmas 2.2.20 and 2.2.21
of \cite{Ox}).

\subsection{Integer flows, duality, and integer cuts.}

Let $(\M,E)$ be a regular matroid represented by the $r$-by-$m$  
TU matrix $M$.  The \emph{lattice of integer flows of $\M$} is
$$
\Lambda(\M) = \ker(M) \cap \ZZ^{E},
$$
defined up to isometry.  If $B$ is a base of $\M$ and $M=[I_{r}\ L]$ is a 
representation of $\M$ coordinatized by $B$, then the matrix
$$
U = \left[\begin{array}{c} -L\\  I_{s}\end{array}\right]
$$
is such that $MU=O$.  Since $M$ is TU it follows that $U$ is TU, and
since $U$ has rank $s=\dim\ker(M)$, the columns of $U$ form
an ordered basis $\B(\M,B)=\{\beta_{1},\ldots,\beta_{s}\}$ of $\Lambda(\M)$.
This is a \emph{fundamental basis of $\Lambda(\M)$ coordinatized by $B$}.

If $\M$ is represented by $M=[I_{r}\ L]$ then the \emph{dual matroid}
$\M^{*}$ is represented by $U^{\dagger} = [-L^{\dagger}\ I_{s}]$.
If $M$ is TU then $U^\dagger$ is TU.  The \emph{lattice of integer cuts} of
a regular matroid $\M$, represented by $M$, is
$$
\Gamma(\M) = \Row(M)\cap \ZZ^{E},
$$
in which $\Row(M)$ denotes the row-space of $M$.  As a set this depends 
on $M$, but it is well-defined up to isometry.  From the above, it is clear that
$\Lambda(\M)$ and $\Gamma(\M^{*})$ are isometric.  Since the 
definition of $\Lambda(\M)$ implicitly involves matroid duality, some of our
arguments could be simplified slightly by considering $\Gamma(\M)$ instead.
However, to keep things straight we will consider only $\Lambda(\M)$,
except in Subsection 5.1.

Lemma \ref{basis-wu} is a familiar fact, but we prefer to phrase it
just the way we want.
\begin{LMA} \label{basis-wu}
Let $(\M,E)$ be a regular matroid of rank $r$ on a set $E$ of size 
$m$, and let $s=m-r$.  Let $\B$ be any basis for $\Lambda(\M)$, and let
$Q$ be an $E$-by-$s$ matrix with columns given by the elements of $\B$.
Then $Q$ is WU.
\end{LMA}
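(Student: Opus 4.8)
The plan is to reduce to a fundamental basis, where total unimodularity is already in hand. Since $\Lambda(\M)$ is defined only up to isometry, and since left-multiplying a matrix by a signed permutation matrix (permuting rows and negating some of them) changes the determinant of each square submatrix only by a sign, we may assume $\M$ is presented in a form $M=[I_r\ L]$ coordinatized by some base $B$. Then, as recalled in Subsection 2.3, the matrix $U=\left[\begin{array}{c}-L\\ I_s\end{array}\right]$ is TU and its columns form a fundamental basis $\B(\M,B)$ of $\Lambda(\M)$.

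Next, since $\B$ and $\B(\M,B)$ are two bases of the same free abelian group $\Lambda(\M)$, which has rank $s$, there is a matrix $T\in GL_s(\ZZ)$ — so $\det T=\pm1$ — with $Q=UT$. A maximal square submatrix of the $E$-by-$s$ matrix $Q$ is obtained by choosing a set $S\subseteq E$ of $s$ row indices; writing $Q_S$ and $U_S$ for the resulting $s$-by-$s$ submatrices, we have $Q_S=U_S\,T$, and hence $\det Q_S=\det(U_S)\cdot\det T=\pm\det(U_S)$. Because $U$ is TU, $\det(U_S)\in\{-1,0,+1\}$, so $\det Q_S\in\{-1,0,+1\}$. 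As $S$ was arbitrary, every maximal square submatrix of $Q$ has determinant in $\{-1,0,+1\}$, that is, $Q$ is WU.

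This is essentially a bookkeeping argument, and I do not anticipate a real obstacle; the lemma is, as the paper notes, a familiar fact. The one point that deserves attention is that a fundamental basis is a $\ZZ$-basis of the \emph{saturated} lattice $\ker(M)\cap\ZZ^E$ itself, not merely of some finite-index sublattice — this is what makes the change of basis $T$ genuinely unimodular rather than rational. That is immediate from the coordinatization $M=[I_r\ L]$: if $v\in\ker(M)\cap\ZZ^E$ is split as $v=(v_B,v_{E\setminus B})$ along the base $B$, then $Mv=0$ forces $v_B=-Lv_{E\setminus B}$, so $v=U\,v_{E\setminus B}$ with $v_{E\setminus B}\in\ZZ^s$; thus the columns of $U$ do generate $\ker(M)\cap\ZZ^E$ over $\ZZ$. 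In any case this is already implicit in the assertion of Subsection 2.3 that $\B(\M,B)$ is a basis of $\Lambda(\M)$, so it may simply be quoted, and nothing beyond Lemma \ref{wu2tu} and the material of Section 2 is needed.
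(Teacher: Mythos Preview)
Your proof is correct and follows essentially the same route as the paper: pick a fundamental basis $\B(\M,B)$ with TU matrix $U$, write $Q=UF$ for a change-of-basis matrix $F$ with $\det F=\pm 1$, and conclude that every maximal minor of $Q$ inherits a value in $\{-1,0,+1\}$ from the corresponding minor of $U$. The paper compresses your determinant computation into the single line ``Since $U$ is WU it follows that $Q$ is WU,'' and your added justification that the columns of $U$ generate $\ker(M)\cap\ZZ^E$ over $\ZZ$ is a helpful elaboration of what the paper simply quotes from Subsection~2.3.
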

\begin{proof}
Pick a base $B$ of $\M$ and let $M=[I_{r}\ L]$ represent $\M$ 
coordinatized by $B$.  Then $\B'=\B(\M,B)$ is another basis 
for $\Lambda(\M)$, and any matrix $U$ with these columns is TU.
Since $\B'$ and $\B$ are both bases for the lattice
$\Lambda(\M)$, the change of basis matrix $F$ such that $Q = U F$ has
$\det F=\pm 1$.  Since $U$ is WU it follows that $Q$ is WU.
\end{proof}

If $\B=\{\beta_{1},\ldots,\beta_{s}\}$ is any ordered set of vectors in an
inner-product space, then the \emph{Gram matrix} 
$\Gram(\B)=A=(a_{ij})$ of $\B$ is the $s$-by-$s$ matrix with entries
$a_{ij} = \la \beta_{i},\beta_{j} \ra$ for all $1\leq i,j\leq s$.
Two lattices $\Lambda$ and $\Lambda'$ are isometric if and only if 
they have ordered bases $\B$ and $\B'$, respectively, such that
$\Gram(\B)=\Gram(\B')$.

\section{Simple flows, or signed circuits.}

\subsection{Basic facts.}
Let $(\M,E)$ be a regular matroid represented by a TU matrix $M$, and let
$\Lambda(\M)=\ker(M)\cap\ZZ^{E}$ be its lattice of integer flows (relative to
$M$).  For a column vector $\beta\in\ZZ^{E}$, the \emph{support} of $\beta$
is the subset
$$
\supp(\beta) = \{e\in E:\ \beta(e)\neq 0\}
$$
of $E$.  For $\beta\in\Lambda(\M)$ we have $M\beta = \zero$, so that 
if $\beta\neq\zero$ then $\supp(\beta)$ is a dependent set in $\M$, and hence
contains a circuit (\emph{i.e.} a minimal dependent set) of $\M$.

We require the following familiar facts (and include supporting arguments
as proof sketches).

\begin{LMA} \label{fact-a}
For every $\beta\in\Lambda(\M)$, if $\supp(\beta)$ is a circuit $C$
then $\beta$ spans the subspace of $\ker(M)$ consisting of vectors with
support contained in $C$.
\end{LMA}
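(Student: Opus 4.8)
The plan is to show that any $\gamma \in \ker(M)$ with $\supp(\gamma) \subseteq C$ is a scalar multiple of $\beta$, which immediately gives that $\beta$ spans that subspace (it is nonzero, being in $\ker(M)$ with support exactly the nonempty set $C$). The key structural input is that $C$ is a circuit of $\M$, i.e., a \emph{minimal} dependent set. First I would pick any element $e_0 \in C$ and consider the vector $\delta = \beta(e_0)\gamma - \gamma(e_0)\beta$. This is a linear combination of elements of $\ker(M)$, hence lies in $\ker(M)$, and its $e_0$-coordinate vanishes by construction. Moreover $\supp(\delta) \subseteq \supp(\beta) \cup \supp(\gamma) \subseteq C$, and in fact $\supp(\delta) \subseteq C \drop \{e_0\}$.

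The crux is then the observation that $C \drop \{e_0\}$ is \emph{independent} in $\M$, since $C$ is a circuit. But $\supp(\delta)$ is, by the remark just before the lemma in the excerpt, either empty or a dependent set (it contains a circuit of $\M$ whenever $\delta \neq \zero$). An independent set cannot contain a dependent subset, so we are forced to conclude $\delta = \zero$, i.e., $\beta(e_0)\gamma = \gamma(e_0)\beta$. Since $e_0 \in C = \supp(\beta)$ we have $\beta(e_0) \neq 0$, so $\gamma = (\gamma(e_0)/\beta(e_0))\,\beta$, a scalar multiple of $\beta$ as claimed.

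I do not expect any serious obstacle here: this is the standard argument that a circuit supports a one-dimensional space of circuit-vectors, and every step is a one-line verification. The only point requiring a little care is making sure the two facts being invoked — that $C \drop \{e_0\}$ is independent because $C$ is a minimal dependent set, and that a nonzero element of $\ker(M)$ has dependent support — are exactly what pins down $\delta = \zero$; but both are immediate from the definitions and the remark preceding the lemma. One could alternatively phrase the argument via the rank of $M$ restricted to the columns indexed by $C$ (which has rank $|C|-1$, so its kernel is one-dimensional), but the support-comparison argument above is cleaner and stays within the combinatorial language already set up.
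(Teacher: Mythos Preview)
Your argument is correct and is precisely the fleshed-out version of the paper's one-line proof sketch: the paper simply says that another linearly independent vector in this subspace would yield a dependent set properly contained in $C$, and your combination $\delta = \beta(e_0)\gamma - \gamma(e_0)\beta$ is exactly how one produces that impossible dependent set.
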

\begin{proof}[Proof sketch.]
If there were another linearly independent vector in this subspace then we
could produce a dependent set of $\M$ properly contained in $C$, a
contradiction.
\end{proof}

\begin{LMA} \label{fact-b}
For every $\beta\in\Lambda(\M)$, if $\supp(\beta)$ is a circuit $C$
then all nonzero coordinates of $\beta$ have the same absolute value.
\end{LMA}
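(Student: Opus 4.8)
The plan is to reduce $\beta$, up to an integer scalar, to a fundamental circuit vector attached to a well-chosen base of $\M$, and then to read off its coordinates using total unimodularity.

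First I would fix some $e\in C$. Since $C$ is a circuit, $C\drop\{e\}$ is independent, hence extends to a base $B$ of $\M$; and $e\notin B$ because $C$ is dependent. I would then replace the representing matrix by one coordinatized by $B$, i.e.\ $M=[I_r\ L]$ as in Subsection 2.2. This does no harm to the statement being proved: coordinatizing by $B$ replaces $M$ by $FMP$ with $F$ invertible over $\ZZ$ and $P$ a signed permutation matrix; $F$ acts on rows and leaves $\ker(M)$ unchanged, while $P$ applies a signed permutation of coordinates to $\Lambda(\M)$, which alters neither the support of a vector (as a subset of the relabelled ground set) nor the multiset of absolute values of its entries. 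So we may assume $M=[I_r\ L]$ with $B$ in the first $r$ positions.

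Next I would observe that the fundamental circuit $C(e,B)$ — the unique circuit contained in $B\cup\{e\}$ — is exactly $C$: since $C\drop\{e\}\subseteq B$ we have $C\subseteq B\cup\{e\}$, and $C$ is a circuit, so uniqueness gives $C(e,B)=C$. Therefore the fundamental basis vector $\beta_e$ of $\B(\M,B)$ indexed by $e$ (Subsection 2.3) has $\beta_e(e)=1$, has $\beta_e(b)=-L_{b,e}$ for $b\in B$, and vanishes off $C(e,B)=C$; so $\supp(\beta_e)=C$, and for each $b\in C\drop\{e\}$ the entry $\beta_e(b)=-L_{b,e}$ is a nonzero entry of the TU matrix $[I_r\ L]$ and hence equals $\pm 1$.

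Finally I would invoke Lemma \ref{fact-a}: $\beta$ spans the subspace of $\ker(M)$ of vectors supported in $C$, and $\beta_e$ lies in this subspace, so $\beta_e=\mu\beta$ for some $\mu\in\RR$. Evaluating at $e$, and using $\beta_e(e)=1$ and the fact that $\beta(e)$ is a nonzero integer (because $e\in C=\supp(\beta)$), gives $\mu=1/\beta(e)$, i.e.\ $\beta=\beta(e)\,\beta_e$. Hence every nonzero coordinate of $\beta$ equals $\pm\beta(e)$, so they all have the same absolute value $|\beta(e)|$. I do not anticipate a real obstacle: the only steps needing care are the remark that coordinatizing by $B$ is harmless for this statement, and the claim that $C(e,B)$ equals $C$ rather than a proper subcircuit (which is forced by minimality of circuits); the remainder is routine bookkeeping with fundamental circuits plus the fact that entries of a TU matrix lie in $\{-1,0,+1\}$.
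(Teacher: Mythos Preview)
Your argument is correct. The only delicate point is the claim that $\supp(\beta_e)=C(e,B)$, i.e.\ that $L_{b,e}=0$ for $b\in B\setminus C$; but this is the standard description of the fundamental circuit in the coordinatized form $[I_r\ L]$ (the column $L_{:,e}$ lies in the span of $\{e_b:b\in B\cap C(e,B)\}$ and in no smaller such span), and the paper itself relies on exactly this in its sketch of Lemma~\ref{fact-d}. There is no circularity, since Lemma~\ref{fact-a} is proved independently of Lemma~\ref{fact-b}.

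Your route differs from the paper's. The paper restricts to the submatrix $M_C$ and invokes Cramer's Rule directly: after passing to an irredundant subsystem, each coordinate of $\beta$ is (up to sign) a ratio of subdeterminants of $M_C$, and total unimodularity forces these to lie in $\{-1,0,+1\}$. You instead coordinatize by a base $B\supseteq C\setminus\{e\}$, identify $\beta$ (up to the integer scalar $\beta(e)$) with the fundamental basis vector $\beta_e$, and read the entries off the TU matrix $L$. The two arguments are close in spirit, since bringing $M$ into the form $[I_r\ L]$ is effectively Gaussian elimination and the entries of $L$ are precisely the relevant Cramer determinants; but your version is tidier in that it reuses the fundamental-basis machinery already set up in Subsection~2.3 and Lemma~\ref{fact-a}, while the paper's version is more self-contained and needs neither.
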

\begin{proof}[Proof sketch.]
Let $M_C$ be the submatrix of $M$ supported on columns in $C$,
and write one of the columns of $M_C$ as a linear combination of the others.
This system of linear equations may be redundant -- reducing to an irredundant
subsystem, it can then be solved by Cramer's Rule, and all the
determinants involved are in $\{-1,0,+1\}$ since $M$ is TU.
\end{proof}

An element of $\Lambda(\M)$ is a \emph{simple flow} (or signed
circuit) if it is nonzero, all of its coordinates are in the set $\{-1,0,+1\}$,
and its support is a circuit of $\M$.  Let $\S(\M)$ denote the set of all simple flows in $\Lambda(\M)$.

\begin{LMA} \label{fact-c}
For every circuit $C$ of $\M$ there are exactly two simple flows
$\pm\alpha_C$ with support equal to $C$.
\end{LMA}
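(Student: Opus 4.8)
The plan is to pin down the rank-one subgroup of $\Lambda(\M)$ consisting of flows supported inside $C$, and then to show that its primitive generator is already a simple flow.

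First I would exhibit at least one integer flow whose support is exactly $C$. Since $C$ is a circuit, the columns of $M$ indexed by $C$ are linearly dependent, so there is a nonzero real vector $\gamma$ with $\supp(\gamma)\subseteq C$ and $M\gamma=\zero$; by minimality of $C$ no proper subset of $C$ is dependent, so in fact $\supp(\gamma)=C$. As $M$ has rational entries, $\gamma$ may be chosen rational and then scaled to a vector $\beta\in\ZZ^{E}$ with $M\beta=\zero$ and $\supp(\beta)=C$, so $\beta\in\Lambda(\M)$. Now I would invoke Lemma \ref{fact-a}: since $\supp(\beta)=C$, the vector $\beta$ spans the subspace $W$ of $\ker(M)$ consisting of vectors supported in $C$, so $\dim W=1$. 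Consequently $K := \{\delta\in\ZZ^{E}:\ M\delta=\zero,\ \supp(\delta)\subseteq C\} = W\cap\ZZ^{E}$ is a free abelian group of rank $1$ (it lies in the line $W$ and contains the nonzero vector $\beta$), hence $K=\ZZ\alpha_C$ for some primitive $\alpha_C\in\ZZ^{E}$. Because $\alpha_C$ spans $W$ and $W$ contains a vector with full support $C$, we get $\supp(\alpha_C)=C$.

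Next I would apply Lemma \ref{fact-b} to $\alpha_C$: all of its nonzero coordinates have a common absolute value $d\geq 1$. Then $d$ divides every coordinate of $\alpha_C$, so $d^{-1}\alpha_C\in\ZZ^{E}$; since this vector still lies in $K$ and $\alpha_C$ generates $K$, we must have $d=1$. Thus every coordinate of $\alpha_C$ lies in $\{-1,0,+1\}$ and $\supp(\alpha_C)=C$, i.e.\ $\alpha_C\in\S(\M)$. Finally, any simple flow $\sigma$ with $\supp(\sigma)=C$ lies in $K=\ZZ\alpha_C$, say $\sigma=k\alpha_C$; comparing any coordinate in $C$ (where $\alpha_C$ is $\pm 1$ and $\sigma$ is $\pm 1$) forces $|k|=1$, so $\sigma=\pm\alpha_C$. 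Since $\alpha_C\neq\zero$, these are two distinct simple flows, and the lemma follows.

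I expect the only genuine subtlety to be the step forcing $d=1$: primitivity of $\alpha_C$ alone does not obviously drive its nonzero entries down to $\pm 1$, and it is precisely the total unimodularity of $M$ — funneled through Lemma \ref{fact-b} to make all nonzero coordinates equal in absolute value — that reduces that common value to the (trivial) content of a primitive vector. Everything else is a routine assembly of Lemmas \ref{fact-a} and \ref{fact-b} together with the fact that a rank-one subgroup of $\ZZ^{E}$ has a well-defined primitive generator.
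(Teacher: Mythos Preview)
Your proof is correct and follows essentially the same route as the paper's sketch: produce a nonzero integer flow supported on $C$, invoke Lemma~\ref{fact-a} for one-dimensionality, and use Lemma~\ref{fact-b} to force the nonzero entries of the primitive generator down to $\pm 1$. You have simply filled in the details that the paper leaves implicit.
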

\begin{proof}[Proof sketch.]
Since $C$ is dependent, there is a nonzero $\beta\in\ker(M)$ with
$\supp(\beta)\subseteq C$.  Since $C$ is a circuit, $\supp(\beta)=C$.
Now Lemma \ref{fact-c} follows from Lemmas \ref{fact-a} and \ref{fact-b}.
\end{proof}

\begin{LMA} \label{fact-d}
If $B$ is a base of a regular matroid $\M$ then every element of
$\B(\M,B)$ is a simple flow in $\Lambda(\M)$, and hence $\S(\M)$ spans
$\Lambda(\M)$.
\end{LMA}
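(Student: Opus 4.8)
The plan is to argue entirely in the representation coordinatized by $B$. Pick the base $B$ and write $M=[I_{r}\ L]$ coordinatized by $B$, so that $\B(\M,B)=\{\beta_{1},\ldots,\beta_{s}\}$ consists of the columns of $U$, and $\beta_{j}$ has a $1$ in the coordinate indexed by the $j$-th non-base element $e_{j}$ (from the $I_{s}$-block of $U$) and its remaining coordinates are read off from the $j$-th column of $-L$. First I would dispatch the three defining conditions of a simple flow that are immediate: since $MU=O$ we have $\beta_{j}\in\ker(M)\cap\ZZ^{E}=\Lambda(\M)$; since $U$ is TU (as noted in Subsection 2.3, because $M$ is TU) every coordinate of $\beta_{j}$ lies in $\{-1,0,+1\}$; and since $\beta_{j}(e_{j})=1$ we have $\beta_{j}\neq\zero$. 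Thus $\supp(\beta_{j})$ is a nonempty dependent set, so it contains a circuit of $\M$.

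The one point requiring care is upgrading ``$\supp(\beta_{j})$ is dependent'' to ``$\supp(\beta_{j})$ is a circuit.'' I would do this as follows. Note $\supp(\beta_{j})=\{e_{j}\}\cup S$ where $S\subseteq B$ is the support of the $j$-th column of $L$. Let $C$ be a circuit with $C\subseteq\supp(\beta_{j})$. Since $B$ is independent we cannot have $C\subseteq B$, so $e_{j}\in C$. By Lemma \ref{fact-c} there is a simple flow $\alpha$ with $\supp(\alpha)=C$, and replacing $\alpha$ by $-\alpha$ if necessary we may assume $\alpha(e_{j})=1$. Then $\gamma=\alpha-\beta_{j}\in\ker(M)$ has $\gamma(e_{j})=0$, so
$$
\supp(\gamma)\ \subseteq\ \bigl(C\cup\supp(\beta_{j})\bigr)\setminus\{e_{j}\}\ =\ \supp(\beta_{j})\setminus\{e_{j}\}\ =\ S\ \subseteq\ B,
$$
using $C\subseteq\supp(\beta_{j})$. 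But the columns of $M$ indexed by $B$ are the columns of $I_{r}$, which are linearly independent, so the only element of $\ker(M)$ with support contained in $B$ is $\zero$. Hence $\gamma=\zero$, that is $\alpha=\beta_{j}$, and therefore $\supp(\beta_{j})=\supp(\alpha)=C$ is a circuit. This shows each $\beta_{j}$ is a simple flow, so $\B(\M,B)\subseteq\S(\M)$, and since $\B(\M,B)$ is a basis of $\Lambda(\M)$ it follows at once that $\S(\M)$ spans $\Lambda(\M)$.

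I do not expect any serious obstacle here: the only substantive step is the minimality argument just sketched, and even that is standard (it is essentially the statement that the support of a fundamental basis vector is the fundamental circuit $C(B,e_{j})$). If one prefers, the minimality can instead be quoted directly from the theory of fundamental circuits of a matroid relative to a base, at the cost of importing an external fact; I would favour the self-contained argument above since it uses only Lemma \ref{fact-c} and the independence of the $B$-columns of $M=[I_{r}\ L]$.
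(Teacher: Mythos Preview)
Your argument is correct and follows essentially the same route as the paper, which merely asserts in a one-line sketch that each element of $\B(\M,B)$ is supported on a circuit ``as one easily verifies.'' You have simply supplied that verification in full, via the clean cancellation argument using Lemma~\ref{fact-c} and the independence of the $B$-columns; this is exactly the fundamental-circuit fact the paper leaves implicit.
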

\begin{proof}[Proof sketch.]
Each element of $\B(\M,B)$ is supported on a circuit, as one easily verifies.
\end{proof}

\begin{LMA} \label{fact-e}
If $M'$ is another matrix that represents $\M$ then
an element of $\Lambda(\M)$ is a simple flow relative to $M'$
if and only if it is a simple flow relative to $M$.
\end{LMA}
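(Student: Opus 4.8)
The plan is to reduce the statement to the uniqueness of regular matroid representations (Corollary 10.1.4 of \cite{Ox}, recalled in Subsection 2.2) together with the intrinsic characterizations of simple flows established in Lemmas \ref{fact-a}, \ref{fact-b}, and \ref{fact-c}. The key observation is that ``being a simple flow relative to $M$'' was defined by three conditions: (i) $\beta$ is a nonzero element of $\ker(M)\cap\ZZ^E$; (ii) every coordinate of $\beta$ lies in $\{-1,0,+1\}$; and (iii) $\supp(\beta)$ is a circuit of $\M$. Condition (iii) refers only to the matroid $\M$, not to any representing matrix, and condition (ii) refers only to the coordinate vector $\beta\in\ZZ^E$. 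So the only clause that could conceivably depend on the choice of representing matrix is membership in $\ker(M)\cap\ZZ^E$, i.e. in the lattice $\Lambda(\M)$ itself.

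First I would invoke unique representability: since $\M$ is regular and both $M$ and $M'$ are TU representations of $\M$, by Corollary 10.1.4 of \cite{Ox} they are equivalent, so there exist an invertible matrix $F$ over $\RR$, an $E$-by-$E$ $\RR$-weighted permutation matrix $P$, and (the field automorphism being trivial on $\RR$) the relation $M' = FMP$. In fact, as recalled in Subsection 2.2, when we coordinatize by a common base $B$ we may take $F$ invertible over $\ZZ$ and $P$ a signed permutation matrix, but for this lemma the following suffices: $P$ being a weighted permutation matrix means it acts as a diagonal rescaling composed with a permutation of the coordinates $E$. The permutation part is a relabelling of the ground set that is harmless (it is the same for both $M$ and $M'$ since they represent the \emph{same} matroid on the same labelled set $E$), so after absorbing it we may assume $P$ is diagonal, $P=\mathrm{diag}(p_e)_{e\in E}$ with $p_e\neq 0$. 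Then $\ker(M') = P^{-1}\ker(M)$, so $\beta\in\ker(M')$ if and only if $P\beta\in\ker(M)$, i.e. $(p_e\beta(e))_{e\in E}\in\ker(M)$.

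Now I would argue that for a vector $\beta$ satisfying conditions (ii) and (iii) above, membership in $\ker(M)$ and membership in $\ker(M')$ are equivalent. Suppose $\beta$ is a simple flow relative to $M$, with $\supp(\beta)=C$ a circuit. By Lemma \ref{fact-c} there are exactly two simple flows $\pm\alpha_C$ relative to $M'$ with support $C$ as well (this uses only that $C$ is a circuit of the matroid $\M$, which $M'$ also represents). By Lemma \ref{fact-a} applied to $M'$, the space of vectors in $\ker(M')$ supported on $C$ is one-dimensional, spanned by $\alpha_C$; and by Lemma \ref{fact-b}, $\alpha_C$ has all nonzero coordinates of equal absolute value, which after scaling to primitivity means $\alpha_C$ is a $\{-1,0,+1\}$-vector supported exactly on $C$. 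The vector $P\beta$ has support $C$ (since each $p_e\neq 0$) and lies in $\ker(M)$, so $P\beta = \lambda\,\alpha_C^{(M)}$ for the corresponding simple flow of $M$; comparing with the relation $\ker(M')=P^{-1}\ker(M)$ shows $\beta$ is a scalar multiple of a $\{-1,0,+1\}$-vector supported on $C$ lying in $\ker(M')$, and since $\beta$ itself is already a $\{-1,0,+1\}$-vector supported on $C$, the scalar is $\pm 1$; hence $\beta\in\ker(M')\cap\ZZ^E$ with coordinates in $\{-1,0,+1\}$ and support a circuit, i.e. $\beta$ is a simple flow relative to $M'$. The converse is symmetric, swapping the roles of $M$ and $M'$.

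The only mild subtlety — and the step I would be most careful about — is the bookkeeping around the weighted permutation matrix $P$: one must check that the permutation of coordinates really is the identity once we insist that $M$ and $M'$ represent the same matroid on the same \emph{labelled} ground set $E$, and that the diagonal rescaling, which a priori need only have nonzero real entries, does not interfere with the integrality or the $\{-1,0,+1\}$ condition. Both points are handled by the circuit-space argument above: since $\ker(M)$ restricted to any circuit $C$ is one-dimensional and spanned by a $\{-1,0,+1\}$-vector on $C$, and the same holds for $\ker(M')$, the two notions of simple flow coincide vector-by-vector regardless of the explicit form of $F$ and $P$. Everything else is routine.
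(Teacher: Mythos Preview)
Your reading of the lemma is that a \emph{single} vector $\beta\in\ZZ^{E}$ is a simple flow relative to $M$ if and only if that very same vector is a simple flow relative to $M'$. Under that reading the statement is false: take $\M=\U_{1,2}$ on $E=\{e_1,e_2\}$, $M=[1\ 1]$, $M'=[1\ -1]$. Then $\beta=(1,-1)^{\dagger}$ is a simple flow relative to $M$ (it lies in $\ker(M)$, has entries in $\{-1,0,+1\}$, and its support is the unique circuit), but $\beta\notin\ker(M')$, so it is not a simple flow relative to $M'$. Your ``circuit-space'' argument cannot repair this: the one-dimensional subspaces of $\ker(M)$ and $\ker(M')$ supported on a circuit $C$ are each spanned by a $\{-1,0,+1\}$-vector, but these spanning vectors need not coincide. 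Concretely, your sentence ``The vector $P\beta$ has support $C$ \dots\ and lies in $\ker(M)$'' is an error: you assumed $\beta\in\ker(M)$, which gives no information about $P\beta$.

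What the paper actually asserts (and what its proof sketch makes explicit) is a statement about the \emph{correspondence} between the two concrete lattices: with $M'=FMP$ and $P$ a signed permutation matrix, one has $\beta\in\ker(M')\cap\ZZ^{E}$ if and only if $P\beta\in\ker(M)\cap\ZZ^{E}$, and the claim is that $\beta$ is a simple flow relative to $M'$ if and only if $P\beta$ is a simple flow relative to $M$. Once you know that $P$ is a signed permutation matrix (which follows, as you yourself note, from coordinatizing both TU representations by a common base), this is immediate: multiplication by $P$ preserves $\{-1,0,+1\}$-valuedness and preserves (up to the label permutation, which is a matroid automorphism here) the support, hence circuit-supports go to circuit-supports. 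No appeal to Lemmas \ref{fact-a}--\ref{fact-c} is needed. The crucial point you set aside---that $P$ may be taken to be a signed permutation matrix rather than a general nonzero diagonal---is exactly what makes the argument work; attempting to avoid it is what led you astray.
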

\begin{proof}[Proof sketch.]
By uniqueness of representation for regular matroids, $M'=FMP$
as in Subsection 2.2.  Note that $\beta\in\ker(M')\cap\ZZ^E$ corresponds
to $P\beta\in\ker(M)\cap\ZZ^E$, and that $P$ is a signed permutation matrix.
\end{proof}

\subsection{Consistent decompositions.}

By Lemma \ref{fact-d}, each flow in $\Lambda(\M)$ can be expressed as a sum
of simple flows.  For $\beta\in\Lambda(\M)$, a
\emph{consistent decomposition of $\beta$} is a multiset $\A$ of simple
flows such that:\\
(i)\ $\beta = \sum_{\alpha\in\A}\alpha$;\\
(ii)\ for all $\alpha\in\A$, $\supp(\alpha)\subseteq\supp(\beta)$;\\
(iii)\ for all $\alpha\in\A$ and $e\in E$, $\alpha(e)\beta(e)\geq 0$.

Proposition \ref{cons-decomp} is due to Tutte (Theorem 6.2 of \cite{Tut1}
or Theorem 5.43 of \cite{Tut2}).  We reproduce his proof for completeness
and the readers' convenience.

\begin{PROP}[Tutte] \label{cons-decomp}
Let $(\M,E)$ be a regular matroid represented by a WU matrix $M$.
Then every $\beta\in\Lambda(\M)$ has a consistent decomposition $\A$.
\end{PROP}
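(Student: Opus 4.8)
Following Tutte, the plan is to induct on the nonnegative integer $\ell(\beta):=\sum_{e\in E}|\beta(e)|$. If $\beta=\zero$ the empty multiset is a consistent decomposition, so assume $\beta\neq\zero$. Call a simple flow $\alpha\in\S(\M)$ \emph{$\beta$-conforming} if $\supp(\alpha)\subseteq\supp(\beta)$ and $\alpha(e)\beta(e)\geq0$ for every $e\in E$; the whole argument reduces to producing one such $\alpha$. Indeed, given a $\beta$-conforming simple flow $\alpha$, put $\beta':=\beta-\alpha\in\Lambda(\M)$. On $\supp(\alpha)$ the entry $\alpha(e)$ equals the sign of $\beta(e)$, so each such coordinate of $\beta$ decreases in absolute value by exactly $1$; hence $\ell(\beta')=\ell(\beta)-|\supp(\alpha)|<\ell(\beta)$, and $\beta'$ is itself $\beta$-conforming, so $\supp(\beta')\subseteq\supp(\beta)$. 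By induction $\beta'$ has a consistent decomposition $\A'$, and $\A:=\A'\cup\{\alpha\}$ works for $\beta$: conditions (i) and (ii) are immediate, and for (iii), if $\alpha'\in\A'$ has $\alpha'(e)\neq0$ then $e\in\supp(\beta')$, so $\beta'(e)$ is nonzero with the same sign as $\beta(e)$, giving $\alpha'(e)\beta(e)>0$, while $\alpha(e)\beta(e)\geq0$ by the choice of $\alpha$.

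To produce a $\beta$-conforming simple flow, note first that the set of nonzero $\gamma\in\ker(M)\cap\ZZ^{E}$ which are $\beta$-conforming is nonempty, since it contains $\beta$; choose one, $\gamma$, with $S:=\supp(\gamma)$ as small as possible, and I claim $S$ is a circuit of $\M$. Since $\gamma\neq\zero$ and $M\gamma=\zero$, $S$ is dependent; if $S$ were not itself a circuit it would properly contain one, $C$, and by Lemma \ref{fact-c} there would be simple flows $\pm\alpha_{C}$ with support exactly $C$. For $e\in C$, using $\alpha_{C}(e)^{2}=1$, write $\gamma(e)=\alpha_{C}(e)t_{e}$ with $t_{e}:=\gamma(e)\alpha_{C}(e)\in\ZZ\drop\{0\}$, so the $e$-coordinate of $\gamma-t\alpha_{C}$ vanishes exactly at $t=t_{e}$. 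Since $\gamma(e)\beta(e)>0$ for each $e\in S$, a short check shows $\gamma-t\alpha_{C}$ is $\beta$-conforming for exactly those integers $t$ with $a\leq t\leq b$, where $a:=\max\{t_{e}:t_{e}<0\}$ and $b:=\min\{t_{e}:t_{e}>0\}$ (with the convention $a=-\infty$ or $b=+\infty$ when the set in question is empty; as $C\neq\none$, at least one of $a,b$ is finite). Taking $t$ to be whichever of $a,b$ is finite gives a nonzero $\beta$-conforming vector $\gamma-t\alpha_{C}\in\ker(M)\cap\ZZ^{E}$ with support strictly inside $S$, contradicting minimality. So $S$ is a circuit; by Lemmas \ref{fact-a} and \ref{fact-c}, $\gamma$ is a nonzero integer multiple of a simple flow $\alpha$ supported on $S$, and after replacing $\alpha$ by $-\alpha$ if necessary so that $\gamma$ is a positive multiple of $\alpha$, this $\alpha$ is $\beta$-conforming and is the flow we wanted.

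The heart of the argument, and where I expect the only real difficulty, is this last step — that the minimal-support conforming flow is supported on a circuit. The point of subtracting a multiple of a \emph{simple} flow $\alpha_{C}$, rather than of an arbitrary kernel vector on $C$, is to stay in $\ZZ^{E}$: the $\pm1$ entries of $\alpha_{C}$ force the thresholds $t_{e}$, and hence $a$ and $b$, to be integers. One preliminary reduction is also needed, since Lemmas \ref{fact-a}--\ref{fact-c} were stated for a TU representation whereas here $M$ is merely WU: a WU representation of $\M$ can be brought to coordinatized form $[\,I_{r}\ L\,]$ by left-multiplication by a matrix invertible over $\ZZ$, which does not change $\ker(M)$, followed by a signed permutation of the columns, which alters $\Lambda(\M)$ only by an isometry carrying simple flows to simple flows and consistent decompositions to consistent decompositions; so one may assume $M=[\,I_{r}\ L\,]$ is TU and apply the lemmas of Subsection 3.1 directly.
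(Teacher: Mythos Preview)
Your proof is correct and follows essentially the same route as the paper's: induct on $\sum_e|\beta(e)|$, reduce to producing one conforming simple flow, and obtain it by a support-minimality argument in which a suitable integer multiple of a simple flow is subtracted to shrink the support. The only cosmetic difference is that the paper phrases the minimality step as a minimal-counterexample argument on $\beta$ rather than as your minimal-support argument on a conforming $\gamma$; your explicit reduction from WU to TU representations is a point the paper leaves implicit.
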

\begin{proof}
We begin by showing that if $\beta\neq\zero$ then there exists a simple flow
$\alpha$ that \emph{conforms to $\beta$} in the sense that
$\supp(\alpha)\subseteq\supp(\beta)$ and $\alpha(e)\beta(e)>0$ for all
$e\in\supp(\alpha)$.  If there is a counterexample then there is such
a counterexample $\beta$ with $\supp(\beta)$ minimal.  By Lemmas 
\ref{fact-a}, \ref{fact-b}, and \ref{fact-c}, 
$\supp(\beta)$ is not a circuit.  By Lemma \ref{fact-c},  again, there is
a simple flow $\alpha$ with $\supp(\alpha) \subseteq \supp(\beta)$.
Let $e\in\supp(\alpha)$ be such that $|\beta(e)|$ is minimal.
Replacing $\alpha$ by $-\alpha$ if necessary, we may assume that 
$\alpha(e)\beta(e)>0$.  Now $\beta' = \beta-\beta(e)\alpha$ has
$\supp(\beta') \subset \supp(\beta)$.  If $\beta'=\zero$ then
$\alpha$ conforms to $\beta$.  Otherwise, since $\beta$ was a minimal 
counterexample, there is a simple flow $\alpha'$ conforming to $\beta'$.
From the choice of $e\in\supp(\alpha)$ it follows that $\alpha'$
conforms to $\beta$ as well, a contradiction.

The proposition now follows from the base case $\beta=\zero$ (which
has the consistent decomposition $\A=\none$) by an easy induction on
$||\beta||=\sum_{e\in E}|\beta(e)|$.  For the induction step, let
$||\beta||>0$ and let $\alpha$ be a simple flow conforming to $\beta$.
Then $\beta'=\beta-\alpha$ has $||\beta'||<||\beta||$, so by induction
it has a consistent decomposition $\A'$.   Thus, 
$\A=\A\cup\{\alpha\}$ is a consistent decomposition of $\beta$.
\end{proof}

\subsection{Metric characterization.}

\begin{PROP} \label{sf-metric}
Let $(\M,E)$ be a regular matroid represented by a WU matrix $M$.  For
any nonzero $\alpha\in\Lambda(\M)$, the following are equivalent:\\
\textup{(a)}\ the element $\alpha$ is a simple flow of $\Lambda(\M)$
(relative to $M$);\\
\textup{(b)}\ for all nonzero $\beta,\gamma\in\Lambda(\M)$ such that
$\alpha=\beta+\gamma$, $\la \beta,\gamma \ra<0$.
\end{PROP}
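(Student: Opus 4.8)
The plan is to prove (a)$\Rightarrow$(b) by a direct computation using the structure of simple flows together with consistent decompositions, and (b)$\Rightarrow$(a) by contraposition, exhibiting a splitting $\alpha=\beta+\gamma$ with $\la\beta,\gamma\ra\geq 0$ whenever $\alpha$ is not a simple flow.

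For (a)$\Rightarrow$(b), suppose $\alpha\in\S(\M)$ and $\alpha=\beta+\gamma$ with $\beta,\gamma$ nonzero. Write $\la\beta,\gamma\ra=\tfrac12(\la\alpha,\alpha\ra-\la\beta,\beta\ra-\la\gamma,\gamma\ra)$, so it suffices to show $\la\beta,\beta\ra+\la\gamma,\gamma\ra>\la\alpha,\alpha\ra=|\supp(\alpha)|$. The key observation is that on $\supp(\alpha)$, every coordinate of $\alpha$ is $\pm 1$, so for each $e\in\supp(\alpha)$ we have $\beta(e)+\gamma(e)=\pm 1$; since $\beta(e),\gamma(e)\in\ZZ$ this forces $\beta(e)^2+\gamma(e)^2\geq 1$, with equality iff one of $\beta(e),\gamma(e)$ is $0$ and the other is $\pm 1$. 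Outside $\supp(\alpha)$ we have $\beta(e)=-\gamma(e)$, contributing $\beta(e)^2+\gamma(e)^2=2\beta(e)^2$. Hence $\la\beta,\beta\ra+\la\gamma,\gamma\ra\geq |\supp(\alpha)|$, and to get strict inequality I must rule out the equality case: that would require $\supp(\beta)\cap\supp(\gamma)=\none$, both $\beta,\gamma$ supported inside $\supp(\alpha)$, and $\beta,\gamma\in\{-1,0,+1\}^E$. But then $\supp(\beta)$ is a proper nonempty subset of the circuit $C=\supp(\alpha)$ (properly, since $\gamma\neq\zero$), and $\supp(\beta)$ is a dependent set of $\M$ because $\beta\in\ker(M)$, contradicting that $C$ is a circuit (a minimal dependent set). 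This is where the circuit-minimality of $\supp(\alpha)$ does the real work.

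For (b)$\Rightarrow$(a), I argue the contrapositive: assume $\alpha\neq\zero$ is not a simple flow and produce a decomposition violating (b). By Proposition \ref{cons-decomp}, $\alpha$ has a consistent decomposition $\A$; since $\alpha\notin\S(\M)$, Lemmas \ref{fact-a}--\ref{fact-c} show $\A$ cannot consist of a single simple flow with support $\supp(\alpha)$, so we can split $\A$ into two nonempty sub-multisets $\A_1,\A_2$ and set $\beta=\sum_{\alpha'\in\A_1}\alpha'$, $\gamma=\sum_{\alpha'\in\A_2}\alpha'$. (One must check $\beta$ and $\gamma$ are both nonzero; this follows because consistency, condition (iii), forces every $\alpha'\in\A$ to agree in sign with $\alpha$ on every coordinate, so no cancellation occurs within a nonempty sub-multiset — in particular $\beta(e)$ and $\alpha(e)$ have the same sign for all $e$, and likewise for $\gamma$.) Then for each $e\in E$, $\beta(e)$ and $\gamma(e)$ have the same sign (both agreeing with $\alpha(e)$), so $\beta(e)\gamma(e)\geq 0$, whence $\la\beta,\gamma\ra=\sum_e\beta(e)\gamma(e)\geq 0$, contradicting (b).

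The main obstacle is the non-cancellation bookkeeping in (b)$\Rightarrow$(a): I need to be careful that a consistent decomposition can always be split into two nonempty pieces when $\alpha$ is not simple — in particular handling the case where $\A$ has several simple flows all with the same support $\supp(\alpha)$ (possible only if $\supp(\alpha)$ is a circuit but $\alpha$ has a coordinate of absolute value $>1$), and the case where $\A$ contains a simple flow whose support is a proper subcircuit of $\supp(\alpha)$. In every such case $|\A|\geq 2$, so a split exists; the sign-consistency from condition (iii) then guarantees both parts are nonzero and that $\la\beta,\gamma\ra\geq 0$. The (a)$\Rightarrow$(b) direction is essentially the integrality-plus-minimality argument sketched above and should be routine.
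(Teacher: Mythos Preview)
Your proposal is correct and follows essentially the same route as the paper. For (a)$\Rightarrow$(b) the paper works directly with the coordinatewise products $\beta(e)\gamma(e)\leq 0$ (an integer pair summing to $0,\pm 1$ has nonpositive product) and then rules out $\la\beta,\gamma\ra=0$ by the same circuit-minimality argument you give; your polarization-identity rewrite is just an equivalent repackaging of that computation. For (b)$\Rightarrow$(a) the paper does exactly what you do: take a consistent decomposition $\A$ with $|\A|\geq 2$, pick one $\beta\in\A$, set $\gamma=\alpha-\beta$, and use sign-consistency to get $\la\beta,\gamma\ra\geq 0$.
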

\begin{proof}
First, assume that (a) holds, and let $\alpha=\beta+\gamma$ with nonzero 
$\beta,\gamma\in\Lambda(\M)$.   For every $e\in E$ we have 
$\alpha(e)=\beta(e)+\gamma(e)$,  and since $\alpha(e)\in \{-1,0,+1\}$ we must
have $\beta(e)\gamma(e)\leq 0$.  Since the support of $\alpha$ is a circuit 
of $\M$, the supports of $\beta$ and $\gamma$ cannot be disjoint
(since each contains at least one circuit of $\M$).
Therefore  $\la \beta,\gamma \ra<0$, so that (b) holds.

Conversely, assume that (a) fails to hold.  By Proposition \ref{cons-decomp},
$\alpha$ has a consistent decomposition $\A$.
Since $\alpha$ is nonzero, $\A$ is nonempty.  If $|\A|=1$ then $\alpha$ is
a simple flow.  Thus, assume that $|\A|\geq 2$, and let $\beta\in\A$ and
$\gamma=\alpha-\beta$.  Now $\beta$ and $\gamma$ are nonzero, $\alpha=\beta
+\gamma$, and $\beta(e)\gamma(e)\geq 0$ for all $e\in E$, from the
definition of consistent decomposition.  This shows that
$\la \beta,\gamma \ra \geq 0$, so that (b) fails to hold.
\end{proof}

For an arbitrary lattice $\Lambda$ we define the set of 
\emph{simple elements} to be the set $\S(\Lambda)$ of nonzero elements
$\alpha\in\Lambda$ satisfying condition (b) in Proposition \ref{sf-metric}.
Lemma \ref{sf-iso} is immediate.

\begin{LMA} \label{sf-iso}
Let $\psi:\Lambda\goesto \Lambda'$ be an isometry of integer lattices.
Then $\psi$ restricts to a (metric-preserving) bijection from $\S(\Lambda)$
to $\S(\Lambda')$.
\end{LMA}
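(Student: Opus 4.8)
The plan is to observe that the defining property (b) of Proposition~\ref{sf-metric}, which was promoted to the definition of $\S(\Lambda)$, refers only to the abelian group structure of $\Lambda$ and to its bilinear form, and hence is preserved by any isometry. Recall that $\alpha\in\S(\Lambda)$ means that $\alpha\neq\zero$ and that $\la\beta,\gamma\ra<0$ for every way of writing $\alpha=\beta+\gamma$ with $\beta,\gamma\in\Lambda$ both nonzero. Nothing in this condition mentions coordinates, so it should transport verbatim along $\psi$.

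First I would check that $\psi$ maps $\S(\Lambda)$ into $\S(\Lambda')$. Let $\alpha\in\S(\Lambda)$. Since $\psi$ is an additive bijection we have $\psi(\zero)=\zero$, so $\psi(\alpha)\neq\zero$. Suppose $\psi(\alpha)=\beta'+\gamma'$ with $\beta',\gamma'\in\Lambda'$ both nonzero. Applying the additive homomorphism $\psi^{-1}$ gives $\alpha=\psi^{-1}(\beta')+\psi^{-1}(\gamma')$, and both summands are nonzero because $\psi^{-1}$ is injective. By hypothesis $\la\psi^{-1}(\beta'),\psi^{-1}(\gamma')\ra<0$, and since $\psi^{-1}$ preserves the bilinear forms this quantity equals $\la\beta',\gamma'\ra$. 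Thus every such decomposition of $\psi(\alpha)$ has negative pairing, whence $\psi(\alpha)\in\S(\Lambda')$.

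Applying the identical argument to the isometry $\psi^{-1}$ shows that $\psi^{-1}$ maps $\S(\Lambda')$ into $\S(\Lambda)$; hence $\psi$ restricts to a bijection between $\S(\Lambda)$ and $\S(\Lambda')$, and this restriction preserves the bilinear forms (and therefore all norms and pairwise inner products of simple elements) since $\psi$ does. There is no real obstacle here: the one point worth isolating is exactly that condition~(b) was stated intrinsically, without reference to the ambient $\ZZ^E$, so that membership in $\S$ is manifestly an isometry invariant.
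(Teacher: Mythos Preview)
Your proof is correct and is exactly the unpacking of what the paper means when it says the lemma is ``immediate'': since membership in $\S(\Lambda)$ is defined purely in terms of the additive structure and the bilinear form, it is automatically preserved by any isometry and its inverse. There is nothing to add.
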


Lemma \ref{sf-iso} already severely constrains the possibilities for an
isometry $\psi:\Lambda(\M)\goesto \Lambda(\N)$.  How to get an isomorphism
$\phi:\M_\bullet\goesto\N_\bullet$ from this is still not clear,
however.  This is resolved in the next section.

\section{$g$-Feasible matrices, and proof of Theorem 1.}

Let $\B(\M,B)=\{\beta_{1},\ldots,\beta_{s}\}$ be a fundamental basis 
of $\Lambda(\M)$ (coordinatized by some base  $B$  and representing TU matrix $M$).
Let $U$ be the $m$-by-$s$ matrix with $\{\beta_{1},\ldots,\beta_{s}\}$ as columns.
The Gram matrix $A=U^{\dagger}U$ determines the isometry class of 
$\Lambda(\M)$.  The main effort in the proof of Theorem 1 is to reconstruct
(as far as possible) the matrix $U$ from its Gram matrix $A$.  This is
accomplished by Camion's Lemma \ref{camion} and Corollary \ref{tu-xa} 
below.

\subsection{Inclusion/Exclusion.}

Let $\C=\{C_1,...,C_s\}$ be a collection of subsets of a finite set $E$,
and let $[s]=\{1,2,..,s\}$.  For every $S\subseteq[s]$, define
$$
\phi_\C(S) = \left| \bigcap_{i\in S} C_i \right|
\ \ \mathrm{and} \ \
\gamma_\C(S) =  \left| \bigcap_{i\in S} C_i
\drop \bigcup_{j\in [s]\drop S} C_j
\right|.
$$
Here, by convention, $\bigcap\none = E$.  One sees that for every $S\subseteq
[s]$,
$$
\phi_\C(S) = \sum_{S\subseteq S'\subseteq[s]} \gamma_\C(S').
$$
By Inclusion/Exclusion, it follows that for every $S\subseteq [s]$,
$$
\gamma_\C(S) = \sum_{S\subseteq S'\subseteq[s]} (-1)^{|S'\drop S|}\phi_\C(S').
$$
Note that $\gamma_{\C}(S)\geq 0$ for all $S\subseteq[s]$, from the 
definition.

\subsection{$g$-Feasible matrices.}

Let $A=(a_{ij})$ be an $s$-by-$s$ symmetric matrix of integers,
with positive diagonal entries.  The three-element subsets (or
\emph{triples}) $\{h,i,j\}$ of $[s]$ are divided into three types:\
$\{h,i,j\}$ is \emph{positive}, \emph{null}, or \emph{negative} depending on whether
$$
a_{hi}\cdot a_{ij} \cdot a_{jh}
$$
is positive, zero, or negative.  Let $\Delta(A)$ denote the set of negative
triples of $[s]$.  Define a function $f_A: 2^{[s]}\goesto\NN$ as
follows:\ for each $S\subseteq [s]$,
$$
f_A(S) = \cases{
0 & \text{if $S=\none$},\\
0 & \text{if $Y\subseteq S$ for some $Y\in\Delta(A)$},\\
a_{ii} & \text{if $S=\{i\}$},\\
\min \{ |a_{ij}|:\ \{i,j\} \subseteq S \} & \text{otherwise.} }
$$
Define a second function $g_A:2^{[s]}\goesto\ZZ$ by Inclusion/Exclusion:\
for each $S\subseteq [s]$,
$$
g_A(S) = \sum_{S\subseteq S'\subseteq[s]} (-1)^{|S'\drop S|} f_A(S').
$$
The matrix $A$ is \emph{$g$-nonnegative} provided that $g_A(S)\geq 0$ for
all $\none\neq S\subseteq[s]$, and is \emph{$g$-positive} if it
is $g$-nonnegative and such that $g_A(\{i\})>0$ for all $i\in [s]$.  Notice that, since $f_A(\none)=0$,
if $A$ is $g$-positive and $[s]\neq\none$ then
$$
g_A(\none) = -\sum_{\none\neq S\subseteq[s]} g_A(S) \leq -s < 0.
$$

\begin{PROP} \label{tu-gnneg}
Let $\B=\{\beta_1,...,\beta_s\}\subseteq\{-1,0,+1\}^E$ be a set
of column vectors, let $U$ be the $E$-by-$s$ matrix with columns
$\{\beta_1,\ldots,\beta_{s}\}$, and let $A=(a_{ij})=U^\dagger U = \Gram(\B)$.
For each $i\in[s]$ let $C_i=\supp(\beta_i)$, and let $\C=\{C_1,...,C_s\}$.
If $U$ is TU then for all $\none\neq S\subseteq[s]$
we have $f_A(S) = \phi_\C(S)$ and $g_A(S) = \gamma_\C(S)$,
so that $U^\dagger U$ is $g$-nonnegative.
\end{PROP}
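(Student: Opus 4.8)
The plan is to prove the sharper statement $f_A(S)=\phi_\C(S)$ for \emph{every} nonempty $S\subseteq[s]$. From this $g_A(S)=\gamma_\C(S)$ follows for every nonempty $S$, because $g_A(S)$ and $\gamma_\C(S)$ are given by the \emph{same} Inclusion/Exclusion sum $\sum_{S\subseteq S'\subseteq[s]}(-1)^{|S'\drop S|}(\,\cdot\,)$, which involves only sets $S'\supseteq S$, all nonempty when $S\neq\none$; and then $g_A(S)=\gamma_\C(S)\geq 0$ is exactly the asserted $g$-nonnegativity of $A$. Before doing this I would record two consequences of $U$ being TU. (i) For $i\neq j$ and $e,e'\in C_i\cap C_j$, the $2$-by-$2$ submatrix of $U$ on rows $\{e,e'\}$ and columns $\{i,j\}$ has entries in $\{-1,+1\}$, hence determinant in $\{-2,0,2\}\cap\{-1,0,+1\}=\{0\}$, which forces $\beta_i(e)\beta_j(e)=\beta_i(e')\beta_j(e')$. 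So the product $\beta_i(e)\beta_j(e)$ has a constant value $\epsilon_{ij}\in\{-1,+1\}$ as $e$ ranges over $C_i\cap C_j$, whence $a_{ij}=\epsilon_{ij}|C_i\cap C_j|$ and $|a_{ij}|=|C_i\cap C_j|=\phi_\C(\{i,j\})$; in particular a triple $\{h,i,j\}$ is positive (resp.\ negative) precisely when $C_h\cap C_i$, $C_i\cap C_j$, $C_j\cap C_h$ are all nonempty and $\epsilon_{hi}\epsilon_{ij}\epsilon_{jh}=+1$ (resp.\ $=-1$), and is null precisely when one of them is empty. (ii) $U$ has no $3$-by-$3$ submatrix with zero diagonal, off-diagonal entries in $\{-1,+1\}$, and product of its six off-diagonal entries equal to $+1$: such a determinant is a sum of two $\pm1$ monomials whose product is that product of six entries, hence $+1$, so the two monomials agree in sign and the determinant is $\pm2$.

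Granting (i) and (ii): if $S=\{i\}$ then $f_A(S)=a_{ii}=\sum_e\beta_i(e)^2=|C_i|=\phi_\C(S)$; if $S=\{i,j\}$ then $f_A(S)=|a_{ij}|=|C_i\cap C_j|=\phi_\C(S)$. If $S$ contains a negative triple $Y=\{h,i,j\}$ then $f_A(S)=0$ by definition; any $e\in\bigcap_{k\in Y}C_k$ would give $\epsilon_{hi}\epsilon_{ij}\epsilon_{jh}=(\beta_h(e)\beta_i(e)\beta_j(e))^2=+1$, contradicting negativity of $Y$, so $\bigcap_{k\in S}C_k\subseteq\bigcap_{k\in Y}C_k=\none$ and $\phi_\C(S)=0=f_A(S)$. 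This leaves the main case $|S|\geq 3$ with no negative triple in $S$, where I must prove $\min\{|a_{ij}|:\{i,j\}\subseteq S\}=\phi_\C(S)$, i.e.\ $\min\{|C_i\cap C_j|:\{i,j\}\subseteq S\}=|\bigcap_{k\in S}C_k|$. If some pair in $S$ has empty intersection both sides vanish (the right side because $\bigcap_{k\in S}C_k$ is contained in that pair), so I may assume every pair in $S$ meets; then no triple of $S$ is null and none is negative, hence every triple of $S$ is positive, and the main case reduces to the following claim.

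\textbf{Claim.} If $S'\subseteq[s]$ with $|S'|\geq 3$ and every triple of $S'$ is positive, then $|\bigcap_{k\in S'}C_k|=\min\{|C_i\cap C_j|:\{i,j\}\subseteq S'\}=:M(S')$. I would prove this by induction on $|S'|$. One always has $|\bigcap_{k\in S'}C_k|\leq M(S')$; suppose the inequality were strict. For $k\in S'$ set $S'_k=S'\drop\{k\}$. If $|S'|=3$ then $S'_k$ is a single pair, so $|\bigcap_{\ell\in S'_k}C_\ell|\geq M(S')$; if $|S'|\geq 4$ then every triple of $S'_k$ is positive, so the induction hypothesis gives $|\bigcap_{\ell\in S'_k}C_\ell|=M(S'_k)\geq M(S')$. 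Either way $|\bigcap_{\ell\in S'_k}C_\ell|\geq M(S')>|\bigcap_{\ell\in S'}C_\ell|$, while $\bigcap_{\ell\in S'}C_\ell=C_k\cap\bigcap_{\ell\in S'_k}C_\ell$, so there exists $e^{(k)}\in\bigcap_{\ell\in S'_k}C_\ell$ with $e^{(k)}\notin C_k$. Pick distinct $a,b,c\in S'$: then $e^{(a)}\in(C_b\cap C_c)\drop C_a$, and symmetrically, so $e^{(a)},e^{(b)},e^{(c)}$ are pairwise distinct rows (e.g.\ $e^{(a)}\notin C_a$ but $e^{(b)},e^{(c)}\in C_a$). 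The $3$-by-$3$ submatrix of $U$ on rows $e^{(a)},e^{(b)},e^{(c)}$ and columns $a,b,c$ has zero diagonal and off-diagonal entries in $\{-1,+1\}$, and the product of its six off-diagonal entries, grouped by row, equals $\epsilon_{bc}\epsilon_{ca}\epsilon_{ab}=+1$ since $\{a,b,c\}$ is a positive triple. This contradicts (ii), proving the Claim and hence the Proposition. The inductive argument here is the crux and the main obstacle: total unimodularity beyond $2$-by-$2$ minors enters only at this point, via the manufacture at each stage of three rows realizing a forbidden $3$-by-$3$ pattern on three suitably chosen columns; the singleton, pair, and negative-triple cases are routine by comparison.
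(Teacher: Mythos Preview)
Your proof is correct and follows the same overall architecture as the paper's: reduce to showing $f_A(S)=\phi_\C(S)$ for all nonempty $S$, dispose of $|S|\le 2$ and the negative/null-triple cases by the $2$-by-$2$ TU argument, and in the all-positive-triples case use a minimal-counterexample/induction to manufacture, for each $k\in S$, an element $e^{(k)}\in\bigcap_{\ell\neq k}C_\ell\setminus C_k$.

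The one genuine difference is in how the contradiction is extracted from these elements. The paper normalizes signs (replacing some $\beta_i$ by $-\beta_i$ and some rows by their negatives) so that the full $t$-by-$t$ submatrix on rows $\{e_h:h\in S\}$ and columns $S$ becomes $J_t-I_t$, and then computes $\det(J_t-I_t)=\pm(t-1)$. You instead keep the signs untouched, pick any three of the $e^{(k)}$, and observe directly that the resulting $3$-by-$3$ zero-diagonal $\{-1,+1\}$-submatrix has its two nonzero determinant monomials agreeing in sign (their product being $\epsilon_{ab}\epsilon_{bc}\epsilon_{ca}=+1$), forcing determinant $\pm 2$. This is a tidy improvement: it avoids the sign-normalization bookkeeping and shows that only $2$-by-$2$ and $3$-by-$3$ minors of $U$ are actually needed, whereas the paper's argument invokes a $t$-by-$t$ minor whose size depends on the minimal counterexample.
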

\begin{proof}
We use the notation $\beta_i(e)=U_{ei}$ for the entries of the matrix $U$.
We claim that for all $\none\neq S\subseteq[s]$,
$$
f_A(S) = \phi_\C(S).
$$
From this it follows by Inclusion/Exclusion that for all
$\none\neq S\subseteq[s]$,
$$
g_A(S) = \gamma_\C(S).
$$
The combinatorial meaning of $\gamma_\C$ then shows that
$A$ is $g$-nonnegative.

To prove the claim, consider any nonempty $S\subseteq[s]$.

If $S=\{i\}$ then
$$
f_A(\{i\}) = a_{ii} = \la \beta_i, \beta_{i} \ra = |C_i| = \phi_\C(\{i\}).
$$

If $S=\{i,j\}$ then consider any $\{e,f\}\subseteq C_i\cap C_j$.
Since $U$ is TU, the submatrix $Z$ of $U$ supported on
rows $e$ and $f$ and columns $i$ and $j$ has $\det Z\in\{-1,0,+1\}$.
All four entries of $Z$ are in $\{-1,+1\}$.
Computing the determinants of all possibilities one finds that
$Z$ has an even number of $-1$s, that $\det Z = 0$, and that $Z$ has rank one.
That is,
$$
\beta_i(e) \beta_j(e) = \beta_i(f) \beta_j(f).
$$
It follows that the function $e\mapsto \beta_i(e) \beta_j(e)$ is constant on
$C_i\cap C_j$, so that $|C_i\cap C_j|=|a_{ij}|$.  Equivalently,
for any $e\in C_i\cap C_j$,
$$
a_{ij} \cdot \beta_i(e) \beta_j(e) > 0. 
$$
(This is true even if $a_{ij}=0$, since then $C_i\cap C_j=\none$.)
Therefore,
$$
f_A(\{i,j\}) = |a_{ij}| = |C_i\cap C_j| = \phi_\C(\{i,j\}).
$$

It remains to consider the case that $|S|\geq 3$.

First, consider any $\{h,i,j\}\subseteq S$.  If $e\in C_h\cap C_i \cap C_j$
then from the above it follows that
$$
a_{hi}a_{ij}a_{jh} \cdot \beta_h(e)^2 \beta_i(e)^2 \beta_j(e)^2 > 0,
$$
and hence that $\{h,i,j\}$ is a positive triple for $A$.
Thus, if $S$ contains a negative or a null triple $\{h,i,j\}$ then 
$$
f_A(S) = 0 = |C_h\cap C_i\cap C_j|
= \left| \bigcap_{k\in S} C_k \right| = \phi_\C(S).
$$

Finally, consider the case that every triple contained in $S$ is positive.
We show that $f_A(S)=\phi_\C(S)$ by contradiction,
so suppose that there exists a set $S\subseteq[s]$ such that
$f_A(S)\neq\phi_\C(S)$.  Then there is such a set for which $S$ is minimal
according to set inclusion;  by the above observations, $|S|=t\geq 3$.
Replacing $\beta_{i}$ by $-\beta_{i}$ as necessary, we can assume that
$a_{ij}>0$ for all $\{i,j\}\subseteq S$.
(This is proved by induction on $t$; the base case $|t|=3$ and the
induction step both rely on the fact that every triple contained in $S$
is positive.)  Then, multiplying rows of 
$U$ by $-1$ as necessary, we can assume that $\beta_i(e)=1$ for all 
$i\in S$ and $e\in C_i$. 
Let $\{i,j\}\subset S$ be such that $a_{ij}$ is minimal.  Note that since
$A=\Gram(\B)$ and each $\beta_i\in\{-1,0,+1\}^E$, we have
$a_{ij} \leq \min \{a_{ii}, a_{jj}\}$ for all $\{i,j\}\subseteq[s]$.
Also note that for every $S'\subseteq S$ with $|S'|\geq 2$, we have
$f_{A}(S')\geq f_{A}(S) = a_{ij}$.
Now
$$
\phi_\C(S) = \left| \bigcap_{\ell\in S} C_\ell \right|
\leq |C_i\cap C_j| = a_{ij} = f_{A}(S).
$$
Since $S$ is a minimal set for which $f_{A}(S)\neq\phi_{\C}(S)$, 
it follows that $\phi_\C(S) < f_{A}(S)$, and that for every $h\in S$,
$$
\phi_\C(S\drop\{h\}) = f_A(S\drop\{h\}) \geq f_{A}(S) > \phi_\C(S).
$$
Therefore, for every $h\in S$ there is an element
$$
e_{h}\in \left(\bigcap_{\ell\in S\drop\{h\}} C_{\ell}\right) \drop C_{h}.
$$
These elements are pairwise distinct.
Let $Z$ be the submatrix of $U$ supported on columns $\{\beta_{i}:\ 
i\in S\}$ and rows $\{e_{h}:\ h\in S\}$.  By permuting rows and 
columns of $Z$ we can bring this into the form $J_{t}-I_{t}$, in which
$J_{t}$ is the $t$-by-$t$ all-ones matrix.  This is the adjacency 
matrix of the complete graph $K_{t}$, which has eigenvalues
$t-1$ of multiplicity $1$ and $-1$ of multiplicity $t-1$.  Therefore, since
$t\geq 3$, we see that
$$
\det Z = \pm \det(J_{t}-I_{t}) = \pm (t-1) \not\in \{-1,0,+1\}.
$$
This contradicts the hypothesis that $U$ is TU, showing that the
defective set $S\subseteq[s]$ is impossible.  This completes the proof.
\end{proof}

Let $A=(a_{ij})$ be an $s$-by-$s$ $g$-nonnegative matrix, and let
$k= -g_{A}(\none) \geq 0$.  Define a $k$-by-$s$ $\{0,1\}$-matrix $X(A)$
by saying that for each $\none\neq S\subseteq[s]$, exactly $g_{A}(S)$
rows of $X(A)$ are equal to the indicator row-vector of the subset 
$S\subseteq[s]$.  (Note that $X(A)$ has no zero rows.)  The
matrix $X(A)$ is defined only up to arbitrary permutation of the rows.
When $A$ is $g$-positive we usually permute the rows of $X(A)$ so that
the bottom $s$ rows form an identity submatrix $I_{s}$.

\begin{CORO} \label{tu-xa}
Let $U$ be a TU matrix, and let $A=U^{\dagger}U$ (which is
$g$-nonnegative).  Then the rows of $X(A)$ can be permuted so that
they are exactly the nonzero rows of $U^{\sharp}$.
\end{CORO}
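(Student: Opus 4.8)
The plan is to read the corollary off almost directly from Proposition \ref{tu-gnneg}; the only content is to translate the combinatorial quantity $\gamma_\C(S)$ into a statement about the rows of $U^\sharp$. First I would note that, since $U$ is TU, every entry of $U$ lies in $\{-1,0,+1\}$ (each entry is a $1$-by-$1$ submatrix), so the columns $\beta_1,\ldots,\beta_s$ of $U$ are $\{-1,0,+1\}$-vectors and Proposition \ref{tu-gnneg} applies with $E$ the index set of the rows of $U$. Writing $C_i=\supp(\beta_i)$ and $\C=\{C_1,\ldots,C_s\}$, this gives that $A=U^\dagger U$ is $g$-nonnegative and that $g_A(S)=\gamma_\C(S)$ for every nonempty $S\subseteq[s]$.

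Next I would describe the rows of $U^\sharp$ combinatorially. For a row index $e\in E$, the $e$-th row of $U^\sharp$ has a $1$ in column $i$ exactly when $U_{ei}\neq 0$, that is, exactly when $e\in C_i$; hence this row is the indicator row-vector of the set $S_e=\{i\in[s]:e\in C_i\}$. Consequently, for each nonempty $S\subseteq[s]$, the number of rows of $U^\sharp$ equal to the indicator vector of $S$ is $\bigl|\{e\in E:S_e=S\}\bigr|=\bigl|\bigcap_{i\in S}C_i\drop\bigcup_{j\in[s]\drop S}C_j\bigr|=\gamma_\C(S)$, while the zero rows of $U^\sharp$ are exactly those $e$ with $S_e=\none$.

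Then I would compare with the definition of $X(A)$. Recall that $X(A)$ is the $k$-by-$s$ $\{0,1\}$-matrix, with $k=-g_A(\none)$, in which for each nonempty $S$ exactly $g_A(S)$ rows are the indicator vector of $S$, and which has no zero rows. By the two steps above, $g_A(S)=\gamma_\C(S)$ is exactly the multiplicity of the indicator vector of $S$ among the nonzero rows of $U^\sharp$, so the multiset of rows of $X(A)$ coincides with the multiset of nonzero rows of $U^\sharp$; as a consistency check on the row count, $k=-g_A(\none)=\sum_{\none\neq S\subseteq[s]}g_A(S)=\sum_{\none\neq S\subseteq[s]}\gamma_\C(S)=\bigl|\bigcup_{i\in[s]}C_i\bigr|$, which is the number of nonzero rows of $U^\sharp$ (here the first equality uses $f_A(\none)=0$ together with Möbius inversion). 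Since $X(A)$ is defined only up to permutation of its rows, this is exactly the assertion of the corollary.

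I do not expect a genuine obstacle: once Proposition \ref{tu-gnneg} is available, the argument is pure bookkeeping. The only point requiring a little care is the handling of the empty set — checking that $X(A)$ has exactly $k$ rows and no zero rows — which is precisely where the identity $\sum_{S\subseteq[s]}g_A(S)=f_A(\none)=0$ enters.
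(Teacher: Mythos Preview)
Your proposal is correct and follows essentially the same approach as the paper's proof: invoke Proposition~\ref{tu-gnneg} to get $g_A(S)=\gamma_\C(S)$ for nonempty $S$, interpret $\gamma_\C(S)$ as the number of rows of $U^\sharp$ with support exactly $S$, and compare with the definition of $X(A)$. The paper's version is terser and omits the explicit row-count consistency check you include, but the argument is the same.
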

\begin{proof}
Since $U$ is TU, we have $g_{A}(S)=\gamma_{\C}(S)$ for all $\none\neq 
S\subseteq [s]$, using the result and notation of Proposition \ref{tu-gnneg}.
Thus, for all $\none\neq S\subseteq [s]$, exactly $g_{A}(S)$ rows of $U$ have
support equal to the set $S$ of columns.  By definition, the same is true of
$X(A)$.  The matrix $U$ may also have some zero rows.
\end{proof}

A symmetric matrix $A$ is \emph{$g$-feasible} if there is a TU matrix $U$
such that $U^\dagger U = A$.  By Proposition \ref{tu-gnneg}, this implies that
$A$ is $g$-nonnegative.  Corollary \ref{tu-xa} and Camion's Lemma \ref{camion}
show that if such a matrix $U$ exists then it is unique (modulo deleting zero
rows, permuting the rows, and changing the signs of some rows and columns).
This is the uniqueness result at the heart of our proof of Theorem \ref{main}.

\subsection{Proof of Theorem 1.}

One last technical detail is required.

\begin{LMA} \label{tu-gram}
Let $U$ be an $m$-by-$s$ TU matrix containing $I_s$ as a submatrix.
Then every WU matrix $Q$ such that $Q^{\dagger}Q = U^\dagger U$ is TU.
\end{LMA}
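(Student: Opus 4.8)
The plan is to reduce the statement to Lemma~\ref{wu2tu} by showing that $Q$ contains an $s$-by-$s$ signed permutation matrix as a submatrix. Given such a submatrix, changing the signs of the relevant rows and columns of $Q$ -- an operation preserving WU-ness -- yields a WU matrix containing $I_s$, which is TU by Lemma~\ref{wu2tu}; since sign changes also preserve TU-ness, $Q$ itself is then TU. Set $A := U^\dagger U = Q^\dagger Q$. I would use three facts about $A$. First, since $U$ is TU and contains $I_s$, Cauchy--Binet gives $\det A = \sum_R (\det U_R)^2 \ge (\det I_s)^2 = 1$, so $A$ is nonsingular. Second, by Proposition~\ref{tu-gnneg} applied to $U$, $A$ is $g$-nonnegative; and since $g_A(\{i\}) = \gamma_\C(\{i\})$ counts the rows of $U$ supported exactly on column $i$, while the $I_s$ submatrix supplies one such row for each $i$, $A$ is in fact $g$-positive. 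Third, by Corollary~\ref{tu-xa} the rows of $X(A)$ are, up to permutation, the nonzero rows of $U^\sharp$, so $X(A)$ contains $I_s$ as a submatrix.

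The next step is to show that $Q$ has all entries in $\{-1,0,+1\}$. Applying Cauchy--Binet to $Q^\dagger Q = A$ and using that $Q$ is WU, $\det A = \#\{R : \det Q_R = \pm1\} \ge 1$; fix an $s$-subset $R$ of rows of $Q$ with $\det Q_R = \pm1$. Then $Q' := Q(Q_R)^{-1}$ is WU (right multiplication by an integral unimodular matrix scales every maximal minor by $\pm1$) and has $Q'_R = I_s$, so $Q'$ is TU by Lemma~\ref{wu2tu}. Permuting rows so that the last $s$ rows of $Q'$ form $I_s$, we see that $Q'$ is a fundamental basis matrix $\B(\R',B')$ of $\Lambda(\R')$ for a regular matroid $\R'$ (the column-matroid of $[\,I_{n-s}\ L''\,]$, where $-L''$ is the block of $Q'$ above $I_s$); and since $Q_R$ is unimodular over $\ZZ$, the columns of $Q$ span the same sublattice $L := Q'\ZZ^s = \Lambda(\R')$ of $\ZZ^n$ and hence form another basis of $\Lambda(\R')$. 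Likewise, permuting rows of $U$, we may take $U$ to be a fundamental basis matrix $\B(\M,B)$ of $\Lambda(\M)$ for a regular matroid $\M$; by Lemma~\ref{fact-d} the columns of $U$ are simple flows, hence simple elements, of $\Lambda(\M)$. Since the columns of $U$ and of $Q$ have the same Gram matrix $A$, the map sending the $i$-th column of $U$ to the $i$-th column of $Q$ extends to an isometry $\Lambda(\M) \to \Lambda(\R')$, so by Lemma~\ref{sf-iso} the columns of $Q$ are simple elements of $\Lambda(\R')$; by Proposition~\ref{sf-metric} these are precisely the simple flows of $\R'$, which by definition lie in $\{-1,0,+1\}^n$.

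It remains to show that $Q^\sharp$ contains $I_s$ -- equivalently, that $\gamma_{\C_Q}(\{i\}) \ge 1$ for every $i$, where $\C_Q$ is the family of column-supports of $Q$ -- and this is the step I expect to be the main obstacle. The target is the identity $f_A(S) = \phi_{\C_Q}(S)$ for all $\varnothing \ne S \subseteq [s]$, the exact analogue of the one proved for a TU matrix in Proposition~\ref{tu-gnneg}; from it $\gamma_{\C_Q}(\{i\}) = g_A(\{i\}) \ge 1$ follows by Inclusion/Exclusion, so $Q^\sharp$ then contains $I_s$ and the reduction of the first paragraph concludes the proof (Camion's Lemma~\ref{camion} then also exhibits $Q$, up to deletion of zero rows and permutation and sign changes of rows and columns, as the unique TU signing of $X(A) = U^\sharp$). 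The difficulty is that $Q$ is only assumed WU, so -- unlike in Proposition~\ref{tu-gnneg} -- its $2$-by-$2$ and other non-maximal submatrices are not controlled outright. What is now available is that $Q$ has $\{-1,0,+1\}$ entries whose columns are simple flows of $\R'$ (so column-supports are circuits, with rigid sign relations on pairwise common supports) and that $Q'$ is a TU, indeed fundamental, basis matrix of the same lattice $L$; the intended completion is to run the determinant computations of Proposition~\ref{tu-gnneg} for $Q$ after extending any offending small submatrix to a maximal one, but making that extension rigorous -- equivalently, showing that the unimodular change of basis $Q_R$ carrying the $g$-positive Gram matrix of the fundamental basis $\B(\R',B')$ to the $g$-positive matrix $A$ must be a signed permutation -- is the crux.
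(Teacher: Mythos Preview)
Your first two paragraphs are correct and in fact reproduce the core isometry/simple-flow mechanism of the paper's proof. The gap you yourself flag in the third paragraph is genuine: with only WU-ness and $\{-1,0,+1\}$ entries you do not yet control the $2$-by-$2$ minors of $Q$, so the argument of Proposition~\ref{tu-gnneg} does not transfer to $Q$, and there is no evident way to force $f_A=\phi_{\C_Q}$ (equivalently, to show that the unimodular change of basis $Q_R$ is a signed permutation) without essentially proving $Q$ is TU first.

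The paper circumvents this obstacle by running the isometry argument in the \emph{other} direction and then inducting on $s$. With $Z$ any nonsingular $s$-by-$s$ submatrix of $Q$ and $F=Z^{-1}$, you already observed that $QF$ is TU and its columns are a fundamental basis of some $\Lambda(\N)$. The isometry $\Lambda(\N)\to\Lambda(\M)$ then sends the columns of $QF$ (simple flows) to the columns of $UF$, which are therefore simple flows in $\Lambda(\M)$ and hence $\{-1,0,+1\}$-valued. The point is that $U$, not $Q$, is the matrix known to contain $I_s$; consequently $UF$ contains $F$ as a submatrix, so the entries of $F=\mathrm{adj}(Z)/\det(Z)$ lie in $\{-1,0,+1\}$. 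This shows that every nonsingular $(s-1)$-by-$(s-1)$ submatrix of $Q$ (extend it to a nonsingular $s$-by-$s$ submatrix $Z$ and read off a cofactor) has determinant $\pm 1$, i.e.\ each $Q_i$ obtained by deleting a column is WU. Since $U_i$ is TU and contains $I_{s-1}$, induction on $s$ gives that each $Q_i$ is TU; together with $Q$ being WU this yields $Q$ TU. In short, the missing idea is not to prove $Q^\sharp\supseteq I_s$ directly, but to exploit the known $I_s$ inside $U$ to read off the entries of $F$ and thereby descend to smaller $s$.
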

\begin{proof}
We proceed by induction on $s$.  The basis of induction, $s=1$, is
trivial since in this case if $Q$ is WU then $Q$ is TU.

For the induction step we begin by showing that all  $(s-1)$-by-$(s-1)$
minors of  $Q$  are in  $\{-1,0,+1\}$.  Let $Z'$  be a nonsingular
$(s-1)$-by-$(s-1)$ submatrix of  $Q$.  Let  $Z$  be a nonsingular
$s$-by-$s$ submatrix of  $Q$  that contains $Z'$.  Then
$\det(Z)=\pm 1$, since $Q$ is WU, so that $F = Z^{-1}$ also has
$\det(F)=\pm 1$.  Now  $QF$  is  WU  and contains  $I_s$  as a submatrix,
so  $QF$  is  TU by Lemma \ref{wu2tu}.  Permuting this $I_s$ submatrix of
$QF$ to the bottom $s$ rows, the columns of  $QF$  are a fundamental
basis of a lattice $\Lambda(\N)$  for some regular matroid  $\N$.  Similarly,
the columns of  $U$  are a fundamental basis of a lattice  $\Lambda(\M)$
for some regular matroid  $\M$ (after permuting the $I_s$ submatrix of $U$ to
the bottom $s$ rows).  Since  $Q^{\dagger}Q = U^\dagger U$,
it follows that $(QF)^{\dagger}QF = (UF)^{\dagger}UF$.
Thus, the $i$-th column of $UF$  is the image of the $i$-th column of  
$QF$ (for each $i\in[s]$)  by means of an isometry from
$\Lambda(\N)$  to  $\Lambda(\M)$.  Since the columns of  $QF$  are simple
flows in  $\Lambda(\N)$  (by Lemma \ref{fact-e}), it follows from Lemma \ref{sf-iso} that
the columns of  $UF$  are simple flows in  $\Lambda(\M)$.  Thus, by
Proposition \ref{sf-metric}, the columns of  $UF$  are  $\{-1,0,+1\}$-valued.
Since  $U$  contains  $I_s$,  $UF$  contains $I_s F = F$ as a submatrix.
Thus, the entries of $F = Z^{-1} = \mathrm{adj}(Z)/\det(Z)$  are all in the set
$\{-1,0,+1\}$. Therefore $\det(Z')=\pm 1$, as required.

Now, for any  $i\in[s]$,  let  $Q_{i}$  be the submatrix of  $Q$  obtained
by deleting column  $i$  from  $Q$, and define  $U_{i}$  similarly.
Clearly  $U_{i}$  is  TU,  $U_{i}$  contains  $I_{s-1}$ as a submatrix,
and  $Q_{i}^{\dagger}Q_{i} = U_{i}^{\dagger} U_{i}$.  The previous paragraph
shows that each $Q_{i}$  is  WU.  Finally, the induction hypothesis shows that
each  $Q_{i}$  is  TU, and since  $Q$ is also  WU  it follows that  $Q$  is  TU.
This completes the induction step, and the proof.
\end{proof}

\begin{proof}[Proof of Theorem 1.]
We begin by proving that if $\M$ and $\N$ are regular matroids for 
which $\M_\bullet$ and $\N_\bullet$ are isomorphic, then $\Lambda(\M)$
and $\Lambda(\N)$ are isometric.
Let $\phi:E(\M_\bullet)\goesto E(\N_\bullet)$ be an isomorphism,
let $r$ be the rank of $\M_\bullet$ and let $k=|E(\M_\bullet)|$.
Let $B$ be any base of $\M_\bullet$ and let $\phi(B)$ be the
corresponding base of $\N_\bullet$.  Coordinatized by these bases,
both $\M_\bullet$ and $\N_\bullet$ are represented by the same $r$-by-$k$
matrix of the form $[I_r\ L]$ for some $r$-by-$s$ TU matrix $L$
(in which $s=k-r$).  Since $\M_{\bullet}$ has no co-loops, $L$ has no
zero rows. Let $\M$ have $p$ co-loops, and let $\N$ have $q$ co-loops.
Then $\M$ and $\N$ are represented by the matrices $M$ and $N$, 
respectively, in which
$$
M = \left[ \begin{array}{ccc}
I_{p} & O_{p\times r} & O_{p\times s} \\
O_{r\times p} & I_{r} & L
\end{array}\right]
\ \ \ \mathrm{and}\ \ \
N = \left[ \begin{array}{ccc}
I_{q} & O_{q\times r} & O_{q\times s} \\
O_{r\times q} & I_{r} & L
\end{array}\right].
$$
Here, $O_{a\times b}$ denotes the $a$-by-$b$ all-zero matrix.

As in Subsection 2.3, the lattices $\Lambda(\M)$ and $\Lambda(\N)$
have bases given by the columns of the matrices
$$
Q_{\M} = \left[ \begin{array}{c}
O_{p\times s} \\
-L \\
I_s
\end{array}\right]
\ \ \ \mathrm{and}\ \ \
Q_{\N} = \left[ \begin{array}{ccc}
O_{q\times s} \\
-L \\
I_s
\end{array}\right],
$$
respectively. One sees immediately that
$$
Q_{\M}^{\dagger} Q_{\M} = Q_{\N}^{\dagger} Q_{\N},
$$
and it follows that the lattices $\Lambda(\M)$ and $\Lambda(\N)$ are
isometric.

Conversely, assume that $\M$ and $\N$ are regular matroids and let
$\psi:\Lambda(\M)\goesto\Lambda(\N)$ be an isometry.  Let $s$ be the
rank of $\Lambda(\M)$ and $\Lambda(\N)$.  Let $|E(\M)|=m$ and 
$|E(\N)|=n$.

Let $\B=\B(\M,B)$ be a fundamental basis of $\Lambda(\M)$ 
coordinatized by a base $B$ of $\M$.  Let $U$ be an $m$-by-$s$ matrix
with the elements $\beta_{i}\in\B$ for $i\in[s]$ as columns.
Fix a TU matrix $N$ representing $\N$ over $\RR$, such that
$\Lambda(\N)=\ker(N)\cap\ZZ^{n}$.  Let $Q$ be the $n$-by-$s$ matrix
with the elements $\psi(\beta_{i})$ for $i\in[s]$ as columns.

Now $U$ is an $m$-by-$s$ TU matrix that contains $I_{s}$ as a 
submatrix, and since $\psi$ is an isometry it follows that 
$Q^{\dagger} Q = U^{\dagger} U$.  Since $\psi$
is an isometry and $\B$ is a basis for $\Lambda(\M)$, the columns of 
$Q$ form a basis for $\Lambda(\N)$.  From Lemma \ref{basis-wu} it follows
that $Q$ is WU, and then from Lemma \ref{tu-gram} it follows that $Q$ is TU.

Now both $U$ and $Q$ are TU matrices such that $A = U^{\dagger} U = 
Q^{\dagger} Q$.  By Corollary \ref{tu-xa}, the rows of $U$ and of $Q$ can
be permuted so that the nonzero rows of $U^{\sharp}$ and of $Q^{\sharp}$
both agree with $X=X(A)$.  Let $k=-g_{A}(\none)$ be the number of rows
of $X$, let $r = k-s$, let $p=m-k$, and let $q=n-k$.  We may assume that
the last $s$ rows of $X$ support an $I_{s}$ submatrix, so that 
$X = [K^{\dagger}\ I_{s}]^{\dagger}$
for some $r$-by-$s$ matrix $K$ with no zero rows.  Thus, the matrices
$U^{\sharp}$ and $Q^{\sharp}$ have the forms
$$
U^{\sharp} = \left[ \begin{array}{c}
O_{p\times s} \\
K \\
I_s
\end{array}\right]
\ \ \ \mathrm{and}\ \ \
Q^{\sharp} = \left[ \begin{array}{ccc}
O_{q\times s} \\
K \\
I_s
\end{array}\right].
$$
By Camion's Lemma \ref{camion}, there are diagonal matrices $H$ and $F$,
invertible over $\ZZ$, such that the submatrix in the last $k$ rows of
$Q'=HQF$ equals the the submatrix in the last $k$ rows of $U$.  The
columns of $Q'$ form a basis for $\Lambda(\N)$, and the matrices $U$
and $Q'$ have the forms
$$
U = \left[ \begin{array}{c}
O_{p\times s} \\
-L \\
I_s
\end{array}\right]
\ \ \ \mathrm{and}\ \ \
Q' = \left[ \begin{array}{ccc}
O_{q\times s} \\
-L \\
I_s
\end{array}\right]
$$
for some $r$-by-$s$ TU matrix $L$ with no zero rows.  Thus the regular 
matroids $\M$ and $\N$ are represented (over $\RR$) by the matrices
$$
M = \left[ \begin{array}{ccc}
I_{p} & O_{p\times r} & O_{p\times s} \\
O_{r\times p} & I_{r} & L
\end{array}\right]
\ \ \ \mathrm{and}\ \ \
N = \left[ \begin{array}{ccc}
I_{q} & O_{q\times r} & O_{q\times s} \\
O_{r\times q} & I_{r} & L
\end{array}\right],
$$
respectively.  From the forms of these representing matrices one sees that
$\M_{\bullet}$ and $\N_{\bullet}$ are both represented by $[I_{r}\ L]$,
and thus are isomorphic.
\end{proof}

\section{Concluding observations.}

\subsection{The lattice of integer cuts.}

Recall the lattice $\Gamma(\M)$ of integer cuts of a regular
matroid $\M$, defined in Subsection 2.3.   Theorem \ref{main} is equivalent
to each of the following two statements.  (We omit the trivial proofs by
duality).  For any matroid $\M$, let $\M^\circ$ denote the minor of $\M$
obtained by deleting all loops of $\M$.

\begin{CORO}
Let $\M$ and $\N$ be regular matroids.  Then
$\Gamma(\M)$ and $\Gamma(\N)$ are isometric if and only
if $\M^\circ$ and $\N^\circ$ are isomorphic.
\end{CORO}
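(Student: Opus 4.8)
The plan is to deduce this corollary from Theorem~\ref{main} by matroid duality, with essentially no work beyond bookkeeping. First I would recall two facts already in hand. From Subsection~2.3, the lattices $\Gamma(\M)$ and $\Lambda(\M^{*})$ are isometric (apply the observation made there to $\M^{*}$ and use $\M^{**}=\M$); and, since total unimodularity is preserved under transposition, the dual of a regular matroid is again regular. Together these say that $\Gamma(\M)$ and $\Gamma(\N)$ are isometric if and only if $\Lambda(\M^{*})$ and $\Lambda(\N^{*})$ are isometric, and that Theorem~\ref{main} may legitimately be applied to the regular matroids $\M^{*}$ and $\N^{*}$, yielding: this holds if and only if $(\M^{*})_{\bullet}$ and $(\N^{*})_{\bullet}$ are isomorphic.

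The remaining step is to identify $(\M^{*})_{\bullet}$ in terms of $\M$. Deletion and contraction are interchanged by duality, and the co-loops of $\M^{*}$ are exactly the loops of $\M$; hence contracting every co-loop of $\M^{*}$ is dual to deleting every loop of $\M$, that is, $(\M^{*})_{\bullet} = (\M^{\circ})^{*}$. Since duality is an involution on matroids that carries isomorphisms to isomorphisms, $(\M^{\circ})^{*}$ and $(\N^{\circ})^{*}$ are isomorphic if and only if $\M^{\circ}$ and $\N^{\circ}$ are isomorphic. Chaining these equivalences --- $\Gamma(\M)$, $\Gamma(\N)$ isometric $\Leftrightarrow$ $\Lambda(\M^{*})$, $\Lambda(\N^{*})$ isometric $\Leftrightarrow$ $(\M^{*})_{\bullet}\cong(\N^{*})_{\bullet}$ $\Leftrightarrow$ $\M^{\circ}\cong\N^{\circ}$ --- then gives the corollary.

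I do not expect any real obstacle here. The only point requiring care is getting the duality dictionary the right way round --- that ``contract all co-loops'' ($\bullet$) is dual to ``delete all loops'' ($\circ$), and not the reverse --- together with the observation that regularity is self-dual, so that the appeal to Theorem~\ref{main} for $\M^{*}$ and $\N^{*}$ is valid. Both are standard, and this is precisely the routine proof by duality that the preceding text defers.
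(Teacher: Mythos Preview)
Your argument is correct and is exactly the routine duality proof the paper defers: apply Theorem~\ref{main} to $\M^{*}$ and $\N^{*}$ using $\Gamma(\M)\simeq\Lambda(\M^{*})$, then translate $(\M^{*})_{\bullet}=(\M^{\circ})^{*}$ back via the involution. The paper even records the identity $(\N^{\circ})^{*}=(\N^{*})_{\bullet}$ in the next corollary, so your bookkeeping matches theirs precisely.
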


\begin{CORO}
Let $\M$ and $\N$ be regular matroids.  Then
$\Lambda(\M)$ and $\Gamma(\N)$ are isometric if and only
if $\M_\bullet$ and $(\N^\circ)^* = (\N^*)_\bullet$ are isomorphic.
\end{CORO}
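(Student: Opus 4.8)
The plan is to derive this corollary from Theorem \ref{main} by dualizing. Recall from Subsection 2.3 that for any regular matroid, its lattice of integer flows is isometric to the lattice of integer cuts of its dual. Applying this to the regular matroid $\N^{*}$, and using $\N^{**}=\N$, gives that $\Lambda(\N^{*})$ and $\Gamma(\N)$ are isometric. Hence $\Lambda(\M)$ and $\Gamma(\N)$ are isometric if and only if $\Lambda(\M)$ and $\Lambda(\N^{*})$ are isometric, and by Theorem \ref{main} this holds if and only if $\M_{\bullet}$ and $(\N^{*})_{\bullet}$ are isomorphic.

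What remains is to identify $(\N^{*})_{\bullet}$ with $(\N^{\circ})^{*}$, which is the other identity asserted in the statement. For this I would use two standard facts about matroid duality: an element is a loop of a matroid precisely when it is a co-loop of the dual matroid, and deleting a subset is dual to contracting it, i.e. $(\N\drop S)^{*}=\N^{*}/S$ for every $S\subseteq E(\N)$. Taking $S=L$, where $L$ is the set of loops of $\N$ -- equivalently, the set of co-loops of $\N^{*}$ -- yields
$$
(\N^{*})_{\bullet}=\N^{*}/L=(\N\drop L)^{*}=(\N^{\circ})^{*},
$$
which finishes the proof.

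I do not expect any genuine obstacle here: the argument is just a short chain of standard duality identities prepended to Theorem \ref{main}, in the same spirit as the other corollaries of this subsection. The only point requiring a little care is bookkeeping -- tracking ground sets and the loop/co-loop and deletion/contraction correspondences through the double dualization $\N\mapsto\N^{*}\mapsto(\N^{*})_{\bullet}$ -- after which the statement is immediate. (One could equally route the argument through the preceding corollary, using that $\Gamma(\M)$ is isometric to $\Lambda(\M^{*})$ together with the identity $(\M^{*})^{\circ}=(\M_{\bullet})^{*}$; the two derivations amount to the same thing.)
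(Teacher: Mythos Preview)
Your argument is correct and is precisely the ``trivial proof by duality'' the paper alludes to (and omits): reduce to Theorem~\ref{main} via the isometry $\Gamma(\N)\simeq\Lambda(\N^{*})$ from Subsection~2.3, then use the standard loop/co-loop and deletion/contraction duality to identify $(\N^{*})_{\bullet}$ with $(\N^{\circ})^{*}$.
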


\subsection{Lattices in general.}

For convenience, a basis $\B$ of a lattice $\Lambda$ is said to be
$g$-nonnegative, $g$-positive, or $g$-feasible depending on whether
$\Gram(\B)$ has that property.

\begin{PROP}
Let $\Lambda$ be an integral lattice.  The following are equivalent.\\
\textup{(a)}\ $\Lambda$ has a $g$-feasible basis.\\
\textup{(b)}\ $\Lambda$ has a $g$-feasible and $g$-positive basis.\\
\textup{(c)}\ $\Lambda$ is isometric with $\Lambda(\M)$ for some regular
matroid $\M$.
\end{PROP}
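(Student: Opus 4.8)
The plan is to prove the three conditions equivalent by showing $\textup{(b)}\Rightarrow\textup{(a)}\Rightarrow\textup{(c)}\Rightarrow\textup{(b)}$, where $\textup{(b)}\Rightarrow\textup{(a)}$ is immediate from the definitions. For $\textup{(c)}\Rightarrow\textup{(b)}$, suppose $\Lambda$ is isometric to $\Lambda(\M)$ for a regular matroid $\M$. As in the first half of the proof of Theorem \ref{main}, contracting all co-loops leaves $\Lambda(\M)$ unchanged up to isometry, so I may assume $\M=\M_\bullet$ has no co-loops. Coordinatizing by a base $B$ of $\M$ gives a representation $[I_r\ L]$ of $\M$ with $L$ an $r$-by-$s$ TU matrix with no zero rows, and the fundamental basis $\B(\M,B)$ consists of the columns of the TU matrix $U=[-L^{\dagger}\ I_s]^{\dagger}$. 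Its Gram matrix $A=U^{\dagger}U$ is $g$-feasible by definition, and by Proposition \ref{tu-gnneg} it is $g$-nonnegative with $g_A(\{i\})=\gamma_\C(\{i\})$, where $C_i=\supp(\beta_i)$. The row of the $I_s$ block carrying the nonzero entry of the $i$-th column of $U$ belongs to $C_i$ and to no $C_j$ with $j\neq i$, so $\gamma_\C(\{i\})\geq 1>0$; hence $A$ is $g$-positive. Thus $\B(\M,B)$ is a $g$-feasible and $g$-positive basis of $\Lambda(\M)$, and transporting it through the isometry yields such a basis of $\Lambda$, which is $\textup{(b)}$.

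For $\textup{(a)}\Rightarrow\textup{(c)}$, let $\B$ be a $g$-feasible basis of $\Lambda$ and put $A=\Gram(\B)$. By definition there is a TU matrix $U$ with $U^{\dagger}U=A$; since $\Lambda$ is integral, $A$ is positive definite, so $U$ (say $m$-by-$s$) has full column rank $s$, its columns are independent and generate a sublattice $\Gamma$ of $\ZZ^m$ whose associated Gram matrix is $A=\Gram(\B)$, so $\Gamma$ is isometric to $\Lambda$. I would then put $U$ into fundamental-basis form. Since $U$ is TU of rank $s$, some $s$-by-$s$ submatrix $U_R$ (on a row set $R$) has $\det U_R=\pm 1$; replacing $U$ by $UU_R^{-1}$ (an invertible integer column operation, so $\Gamma$ is unchanged) produces a matrix that contains $I_s$ in rows $R$ and all of whose $s$-by-$s$ minors still lie in $\{-1,0,+1\}$, hence is WU and, by Lemma \ref{wu2tu}, TU. Permuting rows to bring this $I_s$ to the bottom and deleting any zero rows (this only drops coordinates that vanish on $\Gamma$, and preserves total unimodularity), I obtain a TU matrix $U_0=[-L^{\dagger}\ I_s]^{\dagger}$ for some $r$-by-$s$ TU matrix $L$ with no zero rows, still with the $\ZZ$-span of its columns isometric to $\Lambda$. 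As in Subsection 2.2, $[I_r\ L]$ is then TU, so it represents a regular matroid $\M$ on $r+s$ elements; since $[I_r\ L]\,U_0=O$ and the $I_s$ block of $U_0$ forces every integer vector of $\ker([I_r\ L])$ to be an integer combination of the columns of $U_0$, we get $\Lambda(\M)=\ker([I_r\ L])\cap\ZZ^{r+s}=\Gamma$, which is isometric to $\Lambda$. This proves $\textup{(c)}$.

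The one delicate point is the passage in $\textup{(a)}\Rightarrow\textup{(c)}$ from a bare TU matrix $U$ with $U^{\dagger}U=A$ to a matrix $U_0$ in fundamental-basis form: the column operation by $U_R^{-1}$ is what installs the $I_s$ block, and it is precisely that block which guarantees $\ker([I_r\ L])\cap\ZZ^{r+s}$ equals the $\ZZ$-span of the columns of $U_0$ rather than some strictly larger (saturated) lattice --- so that the matroid $\M$ we build actually has $\Lambda(\M)$ isometric to $\Lambda$, and not merely to a finite-index overlattice. Everything else is routine bookkeeping with Lemma \ref{wu2tu}, Proposition \ref{tu-gnneg}, and the matrix constructions already used in the proof of Theorem \ref{main}.
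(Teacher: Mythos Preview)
Your proof is correct and follows essentially the same route as the paper. The paper organizes the cycle as $\textup{(a)}\Rightarrow\textup{(b)}\Rightarrow\textup{(c)}\Rightarrow\textup{(b)}$ (with $\textup{(b)}\Rightarrow\textup{(a)}$ trivial), whereas you merge the first two steps into a single $\textup{(a)}\Rightarrow\textup{(c)}$; but the underlying construction is identical: take a TU matrix $U$ with $U^\dagger U=A$, pick a nonsingular $s\times s$ submatrix, multiply by its inverse to install $I_s$, and use the resulting $[-L^\dagger\ I_s]^\dagger$ form to either read off $g$-positivity (the paper) or directly build the representing matrix $[I_r\ L]$ of $\M$ (you). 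Your treatment of $\textup{(c)}\Rightarrow\textup{(b)}$ via the $I_s$ block giving $\gamma_\C(\{i\})\geq 1$ matches the paper's, and your explicit justification that the $I_s$ block forces $\ker([I_r\ L])\cap\ZZ^{r+s}$ to equal the column span of $U_0$ (rather than a finite-index overlattice) spells out a point the paper leaves implicit.
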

\begin{proof}
For (a) implies (b):\ let $\B$ be a $g$-feasible basis for $\Lambda$.
Let $A=\Gram(\B)$, let $X=X(A)$, and let $U$ be a TU signing
of $X$ such that $U^\dagger U=A$.  Say that $U$ is an $m$-by-$s$ matrix.
Since $A$ has rank $s$, there is an invertible $s$-by-$s$ submatrix
$Z$ of $U$.  Since $U$ is TU, $\det(Z)=\pm 1$, so that $F=Z^{-1}$ is an
integer matrix and $\det(F)=\pm 1$ as well.  Now, $Q=UF$ is an 
$m$-by-$s$ WU matrix that contains $I_{s}$ as a submatrix, so by 
Lemma \ref{wu2tu}, $Q$ is TU.  The columns of $Q$ form a basis for $\Lambda$
(since $F$ is invertible over $\ZZ$), and $Q^{\dagger} Q$ is 
$g$-positive (by Proposition \ref{tu-gnneg}, and since $Q$ contains
$I_{s}$). Since $Q^{\dagger} Q$ is clearly $g$-feasible, this proves (b).

For (b) implies (c):\ if $\B$ is a $g$-feasible and $g$-positive basis of $\Lambda$
then $A=\Gram(\B)$ is $g$-positive and $X=X(A)$ has a TU signing $U$ such that
$U^\dagger U=A$.  The columns of $U$ form a basis $\B'$ of the lattice $\Lambda(\M)$
of some regular matroid.  Now
$$
\Gram(\B')=Q^\dagger Q = A = \Gram(\B),
$$
so that $\Lambda$ and $\Lambda(\M)$ are isometric.

Trivially (b) implies (a).  For (c) implies (b):\
assume that $\psi:\Lambda(\M)\goesto \Lambda$ is
an isometry, and let $B$ be any base of $\Lambda(\M)$.
Then $\B=\B(\M,B)$ is a $g$-feasible and $g$-positive basis of 
$\Lambda(\M)$, so that $\psi(\B)$ is a $g$-feasible and $g$-positive
basis of $\Lambda$.
\end{proof}

\subsection{Some examples.}

\begin{EG} \label{eg1}
A $g$-positive matrix that is not $g$-feasible.
\emph{
The matrix $A$ shown below is $g$-positive, with $X=X(A)$ as shown.
$$
A = 
\left[\begin{array}{cccc}
3 & 1 & 1 & 2 \\
1 & 3 & 1 & 2 \\
1 & 1 & 3 & 2 \\
2 & 2 & 2 & 5
\end{array}\right]
\hspace{1cm}
X = 
\left[\begin{array}{cccc}
1 & 1 & 1 & 1 \\
1 & 0 & 0 & 1 \\
0 & 1 & 0 & 1 \\
0 & 0 & 1 & 1 \\
1 & 0 & 0 & 0 \\
0 & 1 & 0 & 0 \\
0 & 0 & 1 & 0 \\
0 & 0 & 0 & 1
\end{array}\right]
$$
Theorem 13.1.3 of \cite{Ox} shows that $X$ does not have a TU signing
(by pivotting on the top-right entry.)  Thus, $A$ is not 
$g$-feasible. 
}\end{EG}

\begin{EG} \label{eg2}
A $g$-nonnegative matrix $A$ such that $X(A)$ has a TU signing,
but $A$ is not $g$-feasible.
\emph{
The matrix $A$ shown below is $g$-nonnegative, with $X=X(A)$ as shown.
$$
A = 
\left[\begin{array}{cccc}
2  & 1 & 0 & -1 \\
1  & 2 & 1 & 0  \\
0  & 1 & 2 & 1  \\
-1 & 0 & 1 & 2
\end{array}\right]
\hspace{1cm}
X = 
\left[\begin{array}{cccc}
1 & 0 & 0 & 1 \\
1 & 1 & 0 & 0 \\
0 & 1 & 1 & 0 \\
0 & 0 & 1 & 1 
\end{array}\right]
$$
One checks that $X$ itself is TU.  By Camion's Lemma \ref{camion}, any 
TU signing $U$ of $X$ is obtained from $X$ by multiplying some rows 
and columns of $X$ by $-1$.  For any such matrix $U$, the Gram matrix
$U^{\dagger}U$ is obtained from $X^{\dagger}X$ by multiplying some 
rows and the same columns by $-1$.  But $X^{\dagger}X = A^{\sharp}$,
and $A$ cannot be obtained from $A^{\sharp}$ by means of this
operation.  Thus, there is no TU signing $Q$ of $X$ such that 
$Q^{\dagger}Q = A$.  (A $g$-positive example of this is $A+I_{4}$.)
}\end{EG}

\begin{EG} \label{eg3}
A matrix $Q$ such that $Q^{\dagger}Q$ is $g$-positive and
$g$-feasible, but $Q$ is not WU.
\emph{
This example relates to the hypotheses of Lemma \ref{tu-gram}.
Clearly $Q=[2]$ is not WU.   The matrix $A=Q^\dagger Q =[4]$ is
$g$-positive with $X=X(A)=[1\ 1\ 1\ 1]^\dagger$.
Clearly $X$ is TU with $X^{\dagger}X=A$, so $A$ is $g$-feasible.
}\end{EG}

\begin{EG} \label{bcc}
The body-centered cubic lattice is $\Lambda(K_4)\simeq\Gamma(K_4)$.
\emph{
To see this, let the cycles of length four in $K_4$ be $C_1$, $C_2$, and
$C_3$, and let $\alpha_i$ be a simple flow supported on $C_i$ for each
$i\in\{1,2,3\}$.  Now $\{\alpha_1, \alpha_2, \alpha_3\}$ spans a sublattice
$\Pi$ of $\Lambda(K_4)$, and has Gram matrix $4I_3$.  Thus, $\Pi$ is a
cubical lattice with minimum length $2$.  Now,  $\alpha_1+\alpha_2+\alpha_3
= 2\beta$ for some simple flow $\beta\in\Lambda(K_4)$ supported on a
three-cycle.  In fact $\Lambda(K_4)$ is the disjoint union of $\Pi$ and $\Pi+\delta$, proving the claim.  The lattices $\Lambda(K_n)$ are discussed
on pages 194--196 of
\cite{BHN}.
}\end{EG}

\begin{EG} \label{a-root}
The root lattice $\mathsf{A}_n$ is $\Lambda(\U_{1,n+1})\simeq\Gamma(\U_{n,n+1})$.
(The face-centered cubic lattice is $\mathsf{A}_3$.)
\emph{
To see this, for each $i\in[n+1]$ let $\e_i$ be the coordinate column
vector of length $n+1$ with all entries $0$ except for a $1$ in row $i$.
The root lattice $\mathsf{A}_n$ has as a basis the vectors
$s_i= \e_{i+1} - \e_{i}$ for all $i\in[n]$.  Since $\U_{1,n+1}$ is represented
by the all-ones matrix with one row and $n+1$ columns, it is easy to see that
$\{s_1,...,s_n\}$ is a basis for $\Lambda(\U_{1,n+1})$ as well.
The argument on page 194 of \cite{BHN} shows that these are the only root
lattices of the form $\Lambda(\M)$ for some regular matroid.
}\end{EG}

\subsection{Sixth-root-of-unity matroids.}

Let $\omega = \mathrm{e}^{\mathrm{i}\pi/3}$ be a primitive sixth-root of unity,
and let
$$
\mathbb{E} = \{z\in\CC:\ z= a + b \omega\
\text{for some $a,b\in\ZZ$} \}
$$
be the ring (in fact a PID) of \emph{Eisenstein integers}.
A \emph{sixth-root-of-unity matrix} ($\sqrt[6]{1}$ matrix, for short)
is a matrix with entries in $\CC$ such that every square submatrix has
determinant $d$ such that either $d=0$ or $d^6=1$.  A \emph{sixth-root-of-unity
matroid} ($\sqrt[6]{1}$ matroid, for short) is one which can be represented
over $\CC$ by a $\sqrt[6]{1}$ matrix.  Clearly, regular matroids are $\sqrt[6]{1}$.
Lemma 5.8 of \cite{Whi} gives sufficient conditions for a $\sqrt[6]{1}$  
matroid to be uniquely representable over $\CC$ (by a $\sqrt[6]{1}$  
matrix).

For example, $\U_{2,4}$ is not a binary matroid (hence not regular) but it
is represented over $\CC$ by the matrix
$$
\left[\begin{array}{cccc}
1 & 0 & 1 & 1 \\
0 & 1 & 1 & \omega
\end{array}\right].
$$

The vector space of real flows of $\U_{2,4}$ (relative to this representation)
is the real span of the column vector $[-1\ -1\ 1\ 0]^\dagger$.  The vector
space of complex flows of $\U_{2,4}$ is the complex span of both
$[-1\ -1\ 1\ 0]^\dagger$ and $[-1\ -\overline{\omega}\  0\ 1]^\dagger$.
The analogue of the lattice of integer flows for a matroid $(\M,E)$ 
represented over $\CC$ by a $\sqrt[6]{1}$ matrix $M$ is
the \emph{lattice of Eisenstein flows}
$$
\Lambda_\EE(\M) = \ker_\CC(M)\cap \EE^E.
$$
The inner product on $\Lambda_{\EE}(\M)$ is induced by the
Hermitian inner product on $\CC^{E}$.

Such a lattice is not just an abelian group, but even an $\EE$-module.
This allows a stronger version of isometry:  $\psi:\Lambda\goesto\Lambda'$
is an \emph{$\EE$-isometry} if it is a bijection such that both 
$\psi$ and $\psi^{-1}$ are $\EE$-module homomorphisms that preserve the 
inner products on the lattices.  Clearly an $\EE$-isometry is an 
isometry in the usual sense.

If $M$ and $M'$ are $\sqrt[6]{1}$ matrices representing the same
matroid $\M$, then $\ker_\CC(M)\cap \EE^E$ and $\ker_\CC(M')\cap 
\EE^E$ are $\EE$-isometric if and only if $M$ and $M'$ are 
equivalent representations of $\M$.  (This follows easily from the
definition of representation equivalence.)
Thus, $\M$ is uniquely representable by a $\sqrt[6]{1}$ matrix if and 
only if the $\EE$-isometry class of $\Lambda_\EE(\M)$ is independent of the
representing matrix $M$.  It is not too difficult to see that for a regular
matroid $\M$,
$$
\Lambda_\EE(\M) = \Lambda(\M) \otimes \EE,
$$
but, as the example of $\U_{2,4}$ shows, this does not hold for all
$\sqrt[6]{1}$ matroids.  (In fact, this equality holds if and only if
$\M$ is regular, since if it holds then $\Lambda_\EE(\M)$ has an $\EE$-basis
$\B$ that is also a $\ZZ$-basis of $\Lambda(\M)$.  Thus, the matrix
$Q$ formed from the column vectors in $\B$ is $\sqrt[6]{1}$ and real, hence TU.
Therefore $\M^*$ and hence $\M$ are regular.)

The two-sum $\U_{2,4}\oplus_{2}\U_{2,4}$ has two inequivalent 
representations by $\sqrt[6]{1}$ matrices, and these yield Eisenstein
flow lattices that have bases with Gram matrices
$$
\left[
\begin{array}{rrr}
3 & 1+\omega & 1 + \omega \\
1+\overline{\omega} & 4 & 4 - \sigma \\
1+\overline{\omega} & 4 - \overline{\sigma} & 4
\end{array}
\right]
$$
in which $\omega,\sigma\in\CC$ are primitive sixth-roots of unity.
The two cases $\sigma=\omega$ and $\sigma = \overline{\omega}$ yield lattices
which are not $\EE$-isometric, but seem to be isometric.

\begin{CONJ} \label{repn-iso}
If $M$ and $M'$ are sixth-root-of-unity matrices representing the
same matroid $\M$, then $\ker_\CC(M)\cap \EE^E$ and $\ker_\CC(M')\cap 
\EE^E$ are isometric.
\end{CONJ}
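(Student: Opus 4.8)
The plan is to reduce the conjecture to the $3$-connected case by way of the canonical tree decomposition of $\M$ whose pieces are $3$-connected matroids, circuits, or cocircuits, glued along $1$- and $2$-sums (Cunningham and Edmonds; see Chapter 8 of \cite{Ox}), and to analyse how each type of gluing interacts with the lattice $\Lambda_\EE$ and with the set of $\sqrt[6]{1}$ representations of $\M$. Since the tensor description $\Lambda_\EE(\M)=\Lambda(\M)\otimes\EE$ fails outside the regular case, Theorem \ref{main} is not directly available; the argument instead exploits the structure theory of $\sqrt[6]{1}$ representations from \cite{Whi} together with a direct analysis of the Hermitian form.

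The $3$-connected case should be essentially immediate. Where Lemma 5.8 of \cite{Whi} applies — and I expect, though this must be confirmed, that this covers every $3$-connected $\sqrt[6]{1}$ matroid — any two $\sqrt[6]{1}$ representations $M$, $M'$ of $\M$ are equivalent in the sense of Subsection 2.2, say $M'=\sigma(FMP)$. Each of $F$ and $P$ obviously induces a $\ZZ$-isometry of Eisenstein flow lattices, and the field automorphism $\sigma$ (necessarily complex conjugation, or the identity) also does: entrywise conjugation $u\mapsto\overline u$ carries $\ker_\CC(M)\cap\EE^E$ $\ZZ$-linearly and bijectively onto $\ker_\CC(\overline M)\cap\EE^E$, and $\mathrm{Re}\,\la\overline u,\overline v\ra=\mathrm{Re}\,\la u,v\ra$, so it preserves the underlying real bilinear form even though it is only conjugate-$\EE$-linear. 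This is already the mechanism behind the displayed example: the two inequivalent representations of $\U_{2,4}\oplus_2\U_{2,4}$ differ by conjugating one $\U_{2,4}$ factor, an operation that is an isometry but not an $\EE$-isometry.

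For the reduction I would treat $1$-sums and $2$-sums separately. A $1$-sum gives $\Lambda_\EE(\M_1\oplus\M_2)=\Lambda_\EE(\M_1)\perp\Lambda_\EE(\M_2)$, so the claim passes to the summands. For a $2$-sum $\M=\M_1\oplus_2\M_2$ with basepoint $p$ I would first establish an amalgam description: after a suitable $\sqrt[6]{1}$ coordinatization, a vector on $(E_1\cup E_2)\drop\{p\}$ lies in $\Lambda_\EE(\M)$ precisely when it extends, by some common value on the coordinate $p$ (up to a fixed scalar), to Eisenstein flows of $\M_1$ and of $\M_2$; moreover the Hermitian form of $\M$ is the orthogonal sum of the forms of $\M_1$ and $\M_2$ restricted to the non-basepoint coordinates. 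By the theory of inequivalent $\sqrt[6]{1}$ representations, the only freedom in a $\sqrt[6]{1}$ representation of $\M$ beyond a choice of representation for each $\M_i$ is, modulo equivalence, the independent application of a field automorphism to the two sides. I would then show that replacing the representation of one side by an equivalent one (possibly involving $\sigma$) lifts, through the amalgam, to a $\ZZ$-isometry of $\Lambda_\EE(\M)$, and iterate over the decomposition tree so that every $\sqrt[6]{1}$ representation of $\M$ is carried to every other by a composition of $\ZZ$-isometries.

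The \emph{main obstacle} is precisely this last lifting step for $2$-sums. One must show that if $\psi_2$ is a $\ZZ$-isometry between the Eisenstein flow lattices of two $\sqrt[6]{1}$ representations of $\M_2$ that is moreover compatible with the basepoint coordinate, then it extends to a $\ZZ$-isometry of the corresponding amalgamated lattices for $\M$; the difficulty is that the obvious candidate — conjugate the $\M_2$-block while fixing the $\M_1$-block — does not in general respect the common gluing value, so a genuinely new map adapted to the amalgam must be produced. This is exactly the point at which the paper can only report that the two $\U_{2,4}\oplus_2\U_{2,4}$ lattices ``seem to be isometric'', so I expect this step to require either an explicit construction of compatible bases for $2$-sum amalgams or a finer invariant-theoretic criterion for $\ZZ$-isometry of amalgamated Eisenstein lattices. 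The remaining technical points — uniqueness (up to the fixed scalar) of the gluing value for each flow, which follows from $p$ being neither a loop nor a coloop of $\M_1$ or $\M_2$, and the absence of extra representation freedom at $3$-sums — should be routine by comparison.
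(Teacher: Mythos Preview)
This statement is labelled a \emph{Conjecture} in the paper; the paper gives no proof and leaves it open. There is therefore nothing to compare your proposal against on the paper's side.

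Your strategy is a natural one and correctly isolates where the difficulty lies. The observation that entrywise complex conjugation is a $\ZZ$-isometry of Eisenstein flow lattices but not an $\EE$-isometry is exactly right, and it is precisely the mechanism behind the paper's remark that the two lattices arising from $\U_{2,4}\oplus_2\U_{2,4}$ ``seem to be isometric'' despite being $\EE$-inequivalent. However, what you have written is a plan, not a proof, and you yourself flag the two gaps. First, you assume without verification that Lemma~5.8 of \cite{Whi} covers every $3$-connected $\sqrt[6]{1}$ matroid; the paper says only that the lemma gives \emph{sufficient} conditions, so this needs to be checked. Second --- and this is the heart of the matter --- your ``main obstacle'' is genuinely unresolved: you need to lift a conjugate-linear isometry on one factor of a $2$-sum to a $\ZZ$-isometry of the amalgamated lattice, and as you note the obvious block-by-block map fails because conjugating one block does not preserve the common value on the basepoint coordinate. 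This is exactly the place where the paper stops and records a conjecture rather than a theorem. Your proposal has located the right problem but has not moved past it; until that lifting is actually constructed (or bypassed by some other argument), the statement remains open.
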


\begin{CONJ} \label{sixth-root}
Let $\M$ and $\N$ be sixth-root-of-unity matroids.
Then  $\Lambda_\EE(\M)$ and $\Lambda_\EE(\N)$ are
isometric if and only if $\M_{\bullet}$ and $\N_{\bullet}$ are isomorphic.
\end{CONJ}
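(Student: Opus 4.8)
The plan is to mirror the proof of Theorem \ref{main}, lifting each ingredient from the totally unimodular / regular setting to the sixth-root-of-unity / Eisenstein setting. For the forward implication, suppose $\M_\bullet$ and $\N_\bullet$ are isomorphic; fix a base $B$ of $\M_\bullet$ and, using the isomorphism, coordinatize both $\M_\bullet$ and $\N_\bullet$ by the \emph{same} matrix $[I_r\ L]$, where $[I_r\ L]$ is $\sqrt[6]{1}$ and $L$ has no zero rows (since $\M_\bullet$ has no co-loops). Padding with $p$ and $q$ co-loops respectively, exactly as in the proof of Theorem \ref{main}, the fundamental Eisenstein bases of $\Lambda_\EE(\M)$ and $\Lambda_\EE(\N)$ have the same Gram matrix, namely $I_s+L^\dagger L$, so the two lattices are isometric. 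For the statement of the conjecture to be well posed one needs $\Lambda_\EE$ to be independent of the chosen $\sqrt[6]{1}$ representation, which is exactly Conjecture \ref{repn-iso} (its case $\M=\N$); so I would first try to settle \ref{repn-iso}, or at least to reduce \ref{sixth-root} to it.

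For the converse, given a $\ZZ$-isometry $\psi:\Lambda_\EE(\M)\to\Lambda_\EE(\N)$ one wants to reconstruct enough of a representing $\sqrt[6]{1}$ matrix to recover $\M_\bullet$. Note that $\psi$ is only assumed additive, so it need not respect the $\EE$-module structure; as in the regular case the reconstruction must proceed purely metrically. Three pillars need re-proving. First, a metric characterization of \emph{Eisenstein simple flows} --- the nonzero $\alpha$ with all coordinates in $\{0\}\cup\sqrt[6]{1}$ and with $\supp(\alpha)$ a circuit: over any circuit $C$ the flow space is one-dimensional over $\CC$ and (by Cramer's rule, as in Lemma \ref{fact-b}, all relevant determinants being $0$ or sixth roots of unity) there are exactly six simple flows supported on $C$, pairwise unit multiples. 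The candidate metric criterion is the Hermitian analogue of Proposition \ref{sf-metric}: for every $\alpha=\beta+\gamma$ with $\beta,\gamma$ nonzero, $\mathrm{Re}\,\la\beta,\gamma\ra<0$ (equivalently $\|\alpha\|^2<\|\beta\|^2+\|\gamma\|^2$), which is preserved by $\ZZ$-isometries. Proving it needs an Eisenstein substitute for Tutte's consistent decomposition (Proposition \ref{cons-decomp}); because $\EE$ carries no compatible order, the "$\alpha$ conforms to $\beta$" induction cannot be copied verbatim, and finding the right replacement (perhaps a decomposition that is consistent in phase rather than in sign) is the first real hurdle.

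Second, a $g$-feasibility theory over $\EE$: identify which Hermitian Eisenstein matrices $A$ arise as $U^\dagger U$ for a $\sqrt[6]{1}$ matrix $U$, and extract from $A$ the underlying $\{0,1\}$ support pattern $X(A)$. Here the new difficulty is that $a_{ij}=\sum_{e\in C_i\cap C_j}\beta_i(e)\overline{\beta_j(e)}$ is a sum of sixth roots of unity, so $|a_{ij}|$ no longer equals $|C_i\cap C_j|$; one must recover the pairwise overlaps (and the triple-overlap data governing $\Delta(A)$) from $A$ by a finer local analysis. The workhorse is that a $2\times 2$ submatrix of a $\sqrt[6]{1}$ matrix with unit entries has determinant $0$ or a sixth root of unity, which forces the phase ratios $\beta_i(e)\overline{\beta_i(f)}$ for $e,f\in C_i\cap C_j$ to agree across $i,j$ up to a controlled discrepancy --- the analogue of the rank-one conclusion in the proof of Proposition \ref{tu-gnneg}. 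Third, a Camion-type rigidity recovering $U$ up to unit row and column scalings from $X(A)$ together with $A$.

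The main obstacle is this third pillar: Camion's Lemma \ref{camion} \emph{fails} for $\sqrt[6]{1}$ matrices, the two-sum $\U_{2,4}\oplus_2\U_{2,4}$ having inequivalent $\sqrt[6]{1}$ representations with the \emph{distinct} Gram matrices displayed above (the $\sigma=\omega$ and $\sigma=\overline\omega$ cases). So the passage from a lattice isometry class to a representing matrix is genuinely many-valued, and one can only hope to recover $\M_\bullet$ as a matroid, not its representation. For $3$-connected $\sqrt[6]{1}$ matroids unique representability does hold (Lemma 5.8 of \cite{Whi}), so there the program above should run essentially as in Section 4 and deliver Conjecture \ref{sixth-root} in the $3$-connected case. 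For general $\sqrt[6]{1}$ matroids I would use the decomposition of such matroids into $3$-connected pieces glued along $1$- and $2$-sums, argue that an isometry of Eisenstein flow lattices must respect this decomposition (a $2$-sum should manifest as a detectable splitting of the lattice), and rebuild the isomorphism $\M_\bullet\to\N_\bullet$ piece by piece, with each $2$-sum absorbing precisely the representation ambiguity exhibited by $\U_{2,4}\oplus_2\U_{2,4}$. Showing that this gluing ambiguity is the \emph{only} obstruction, and controlling it, is where I expect the bulk of the work, and where the argument will lean hardest on Conjecture \ref{repn-iso}.
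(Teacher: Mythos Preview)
The statement you are attempting is Conjecture~\ref{sixth-root}, and the paper does \emph{not} prove it. It is stated as an open problem; the only thing the paper offers in its place is a single closing paragraph suggesting that one ``could perhaps adopt a strategy similar to the one we used to prove Theorem~\ref{main},'' contingent on finding a metric characterization of simple flows and an appropriate generalization of $g$-feasible matrices. There is therefore no proof in the paper to compare your proposal against.

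What you have written is not a proof either, and you are upfront about this: it is an outline of a programme, with the obstacles clearly flagged (``the first real hurdle,'' ``the main obstacle,'' ``where I expect the bulk of the work''). As a programme it is entirely consistent with, and considerably more detailed than, the paper's own brief suggestion. You correctly identify that even the well-posedness of the conjecture (and the forward direction) rests on Conjecture~\ref{repn-iso}, that Tutte's conformal decomposition has no obvious analogue because $\EE$ carries no compatible order, that $|a_{ij}|$ no longer recovers $|C_i\cap C_j|$ since sums of sixth roots of unity can cancel, and that Camion's Lemma~\ref{camion} fails over $\sqrt[6]{1}$, as witnessed by the paper's own $\U_{2,4}\oplus_2\U_{2,4}$ example. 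Your idea of handling the general case via a $3$-connected decomposition, invoking unique representability on the pieces (Lemma~5.8 of \cite{Whi}) and absorbing the representation ambiguity into the $2$-sum gluing, goes beyond anything the paper hints at and is a reasonable line of attack.

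In short: there is no discrepancy to report, because the paper contains no proof. Your proposal is a sensible and well-informed research plan for an open problem, not a proof, and should be presented as such.
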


One could perhaps adopt a strategy similar to the one we used to prove Theorem
\ref{main} for regular matroids.  The Gram matrix of a basis of $\Lambda_\EE(\M)$
is in general complex Hermitian with entries in $\EE$.  If one can identify
a metric characterization of simple flows, and an appropriate generalization of
$g$-feasible matrices, then much of our argument could carry over.


\end{document}